\newtheorem{theorem}{Theorem}
\newtheorem{corollary}{Corollary}
\newtheorem{lemma}{Lemma}
\def\crg{\mathop{\rm cr}}
\def\rcr{\overline{\hbox{\rm cr}}}
\def\pscr{\widetilde{\hbox{\rm cr}}}
\def\oL#1{{\overline{#1}}}
\def\ese{{s}}
\def\eme{{r}}
\def\ebic#1{{E^{\text{\rm bic}}_{\le #1}}}
\def\emono#1{{E^{\text{\rm mono}}_{\le #1}}}
\def\edge#1{{\ell(#1)}}
\def\te{{t}}
\begin{document}

\title{\vspace{-.5in}On $\leq$$k$-edges, crossings, and halving lines \\of geometric
drawings of $K_{n}$ }
\author{Bernardo M. \'{A}brego$^{\text{1}}$, Mario
Cetina$^{\text{2}}$,\medskip \and Silvia
Fern\'{a}ndez-Merchant$^{\text{1}}$, Jes\'{u}s
Lea\~{n}os$^{\text{3}}$, Gelasio
Salazar$^{\text{4}}$\thanks{Supported by CONACYT Grant
106432}\bigskip
\\$^{\text{1}}${\small Department of Mathematics, California State University, Northridge. \hfill }
\\ {\footnotesize
\texttt{\hspace{0.3in}\{bernardo.abrego,silvia.fernandez\}@csun.edu}\hfill
}
\\$^{\text{2}}${\small Instituto Tecnol\'ogico de San Luis
  Potos\'{\i}.}
{\footnotesize \texttt{mario.cetina@itslp.edu.mx}\hfill }
\\$^{3}${\small Unidad Acad\'emica de Matem\'aticas, Universidad Aut\'{o}noma de
Zacatecas. \hfill }
\\ {\footnotesize
\texttt{\hspace{0.3in}jleanos@mate.reduaz.mx}\hfill}
\\$^{\text{4}}${\small Instituto de F\'isica, Universidad Aut\'{o}noma de San Luis
Potos\'{\i}. \hfill }
\\ {\footnotesize
\texttt{\hspace{0.3in}gsalazar@ifisica.uaslp.mx}\hfill}
}
\date{}
\maketitle \vspace{-0.4in}
\begin{abstract}
 Let $P$ be a set of points in general position in
the plane. Join all pairs of points in $P$ with straight line
segments. The number of segment-crossings in such a drawing, denoted
by $\crg(P)$, is the \emph{rectilinear crossing number} of $P$. A
\emph{halving line} of $P$ is a line passing though two points of
$P$ that divides the rest of the points of $P$ in (almost) half. The
number of halving lines of $P$ is denoted by $h(P)$. Similarly, a
$k$\emph{-edge}, $0\leq k\leq n/2-1$, is a line passing through two
points of $P$ and leaving exactly $k$ points of $P$ on one side. The
number of $(\le k)$-edges of $P$ is denoted by $E_{\leq k}(P)  $.
Let $\rcr(n)$, $h( n)$, and $E_{\leq k}(n)  $ denote the minimum of
$\crg(P)$, the maximum of $h(P)$, and the minimum of $E_{\leq k}(P)
$, respectively, over all sets $P$ of $n$ points in general position
in the plane. We show that the previously best known lower bound on
$E_{\leq k}(n)$ is tight for $k<\lceil ( 4n-2) /9\rceil $ and
improve it for all $k\geq \lceil ( 4n-2) /9 \rceil $. This in turn
improves the lower bound on $\rcr(n)$ from $0.37968\binom{n}
{4}+\Theta(n^{3})$ to $\frac{277}{729}\binom{n}{4}+\Theta(n^{3})\geq
0.37997\binom{n}{4}+\Theta(n^{3})$. We also give the exact values of
$\rcr(n)$ and $h(n)  $ for all $n\leq27$. Exact values were known
only for $n\leq18$ and odd $n\leq21$ for the crossing number, and
for $n\leq14$ and odd $n\leq21$ for halving lines.

\noindent\textbf{2010 AMS Subject Classification}: Primary 52C30,
Secondary 52C10, 52C45, 05C62, 68R10, 60D05, and 52A22.

\noindent\textbf{Keywords}: $k$-edges, $k$-sets, Halving lines,
Rectilinear crossing numbers, Allowable sequences, Geometric
drawings.

\end{abstract}

\section{Introduction}

We consider three important well-known problems in Combinatorial
Geometry: the rectilinear crossing number, the maximum number of
halving lines, and the minimum number of $(\leq k) $-edges of
complete geometric graphs on $n$ vertices. All point sets in this
paper are in the plane, finite, and in general position.

Let $P$ be a finite set of points in general position in the plane.
The \emph{rectilinear crossing number }of $P$, denoted by $\crg(P)$,
is the number of crossings obtained when all straight line segments
joining pairs of points in $P$ are drawn. (A \emph{crossing }is the
intersection of two segments in their interior.) The
\emph{rectilinear crossing number }of $n$ is the minimum number of
crossings determined by any set of $n$ points, i.e., $\rcr(n)=\min\{
\crg(P):\vert P\vert =n\}$. The problem of determining $\rcr(n)$ for
each $n$ was posed by Erd\H{o}s and Guy in the early seventies
\cite{EG},\cite{G1}. This is equivalent to finding the minimum
number of convex quadrilaterals determined by $n$ points, as every
pair of crossing segments bijectively corresponds to the diagonals
of a convex quadrilateral.

A \emph{halving line} of $P$ is a line passing through two points of
$P$ and dividing the rest in almost half. So when $P$ has $n$ points
and $n$ is even, a halving line of $P$ leaves $n/2-1$ points of $P$
on each side; whereas when $n$ is odd, a halving line leaves $( n-3)
/2$ points on one side and $(n-1)/2$ on the other. The number of
halving lines of $P$ is denoted by $h(P) $. Generalizing a halving
line, a $k$\emph{-edge} of $P$, with $0\leq k\leq n/2-1$, is a line
through two points of $P$ leaving exactly $k$ points on one side.
The number of $k$-edges of $P$ is denoted by $E_{k}( P) $. Since a
halving line is a $( \lfloor n/2\rfloor -1) $-edge, then $E_{\lfloor
n/2\rfloor -1}( P) =h( P) $. Similarly, for $0\leq k \leq n/2-1$,
$E_{\leq k}( P) $ and $E_{\geq k}( P)  $ denote the number of $(
\leq k) $-edges and $( \geq k) $-edges of $P$, respectively. That
is, $E_{\leq k}( P) =\sum_{j=0}^{k}E_{j}( P)  $ and $E_{\geq k}( P)
=\sum_{j=k}^{\lfloor n/2\rfloor -1}E_{j}(  P)
=\binom{n}{2}-\sum_{j=0}^{k-1} E_{j}(  P) $. Let $h(n)$ and $E_{\leq
k}( n) $ be the maximum of $h(P)$ and the minimum of $E_{\leq k}( P)
$, respectively, over all sets $P$ of $n$ points. A concept closely
related to $k$-edges is that of \emph{$k$-sets}; a $k$-set of $P$ is
a set $Q$ that can be separated from $P \setminus Q$ with a straight
line. Rotating this separating line clockwise until it hits a point
on each side yields a $(k-1)$-edge, and it turns out that this
association is bijective. Thus the number of $k$-sets of $P$ is
equal to the number of $(k-1)$-edges of $P$. As a consequence, any
of the results obtained here for $k$-edges can be directly
translated into equivalent results for $(k+1)$-sets. Erd\H{o}s,
Lov\'{a}sz, Simmons, and Straus \cite{ELSS}, \cite{L} first
introduced the concepts of halving lines, $k$-sets, and $k$-edges.

Since the introduction of these parameters back in the early 1970s,
the determination (or estimation) of $\rcr(n)$, $h(n)$, and $E_{\le
k}(n)$ have become classical problems in combinatorial geometry.
General bounds are known but exact values have only been found for
small $n$. The best known general bounds for the halving lines are
$\Omega(ne^{c\sqrt{\log n}})\leq h(n)\leq O( n^{4/3})  $, due to
T\'{o}th \cite{T} and Dey \cite{D}, respectively. The previously
best asymptotic bounds for the crossing number are
\begin{equation}
0.3792\binom{n}{4}+\Theta(n^{3})\leq\rcr\left(  n\right)
\leq0.380488\binom{n}{4}+\Theta(n^{3}). \label{boundscr}
\end{equation}
The lower bound is due to Aichholzer et al. \cite{AGOR} and it
follows from Inequality (\ref{lower}) as we indicate below. The
upper bound follows from a recursive construction devised by
\'{A}brego and Fern\'{a}ndez-Merchant \cite{AF2} using the a
suitable initial construction found by the authors in \cite{ACFLS}.
The best lower bound for the minimum number of $( \leq k)  $-edges
is
\begin{equation}
E_{\leq k}\left(  n\right)  \geq3\binom{k+2}{2}+3\binom{k+2-\lfloor
n/3\rfloor}{2}-\max\left\{  0,(k+1-\lfloor n/3\rfloor)(n-3\lfloor
n/3\rfloor)\right\}  \text{,} \label{lower}
\end{equation}
due to Aichholzer et al. \cite{AGOR}. Further references and related problems
can be found in \cite{BMP}.

The last two problems are naturally related, and their connection to
the first problem is shown by the following identity, independently
proved by L\'{o}vasz et al. \cite{LVWW} and \'{A}brego and
Fern\'{a}ndez-Merchant \cite{AF}. For any set $P$ of $n$ points,

\begin{align}
\crg(P)  &  =3\binom{n}{4}-
{\displaystyle\sum\limits_{k=0}^{\left\lfloor n/2\right\rfloor -1}}
k\left(  n-k-2\right)  E_{k}\left(  P\right)  ,\text{ or
equivalently}
\nonumber\\
\crg(P)  &  = {\displaystyle\sum\limits_{k=0}^{\left\lfloor
n/2\right\rfloor -2}} \left(  n-2k-3\right)  E_{\leq k}\left(
P\right) -\frac{3}{4}\binom{n} {3}+\left(  1+\left(  -1\right)
^{n+1}\right) \frac{1}{8}\binom{n}{2}. \label{crossingsvsksets}
\end{align}
Hence, lower bounds on $E_{\leq k}(  n)  $ give lower bounds on
$\rcr(n)$.
\bigskip

The majority of our results (all non-constructive parts) are proved
in the more general context of generalized configurations of points,
where the points in $P$ are joined by pseudosegments rather than
straight line segments. Goodman and Pollack \cite{GP80} established
a correspondence between the set of generalized configurations of
points and what they called \emph{allowable sequences.} In Section
\ref{allowableseq}, we define allowable sequences, introduce the
necessary notation to state the three problems above in the context
of allowable sequences, and include a summary of results for these
problems in both, the geometric and the allowable sequence context.

\begin{table}[h]
\begin{center}%
\begin{tabular}
[c]{r|rrrrrrrrrr}%
$n$ & $14$ & $16$ & $18$ & $20$ & $22$ & $23$ & $24$ & $25$ & $26$ &
$27$\\\hline
\multicolumn{1}{l|}{$\overset{}{%
\begin{tabular}
[c]{l}%
$h(n)=\widetilde{h}(n)  \smallskip$%
\end{tabular}
\ \ \ \ \ }$} & $22^{\ast}$ & $27$ & $33$ & $38$ & $44$ & $75$ &
$51$ & $85$ &
$57$ & $96$\\
\multicolumn{1}{l|}{%
\begin{tabular}
[c]{l}%
$\rcr(n)=\pscr(n) $%
\end{tabular}
} & $324^{\ast}$ & $603^{\ast}$ & $1029^{\ast}$ & $1657$ & $2528$ &
$3077$ &
$3699$ & $4430$ & $5250$ & $6180$%
\end{tabular}
\end{center}
\caption{New exact values. The $^{\ast}$ values were only known in
the
rectilinear case.}%
\label{newvalues}%
\end{table}

The main result in this paper is Theorem \ref{main} in\ Section
\ref{proof central theorem}, which bounds $E_{\geq k}(P)$ by a
function of $E_{k-1}(P)$. This result has the following important
consequences.

\begin{enumerate}
\item In Section \ref{halving lines}, we find exact values of $\rcr (n)  $
and $h(n)$ for $n\leq27$. Exact values were only known for $n\leq18$
and odd $n\leq21$ in the case of $\rcr (n)$, and for $n\leq14$ and
odd $n\leq21$ in the case of $h(n)$. (See Table \ref{newvalues}.) We
also show that the same values are achieved for the more general
case of the pseudolinear crossing number $\pscr (n)$ and the maximum
number of halving pseudolines $\widetilde{h}( n)  $. (See Section
\ref{allowableseq} for the definitions.)

\item Theorem \ref{recursive} in Section \ref{lower bound k-sets} improves the lower
bound in Inequality (\ref{lower}) for $k\geq\left\lceil
(4n-11)/9\right\rceil $. It gives a recursive lower bound whose
asymptotic value is given by
\[
E_{\leq k}(  n)  \geq\binom{n}{2}-\frac{1}{9}\sqrt{1-\frac{2k+2}
{n}}(  5n^{2}+19n-31)  ,
\]
as shown in Corollary \ref{explicit}.

\item Theorem \ref{th: crossing} in Section \ref{lower crossings} improves the lower bound in Inequality (\ref{boundscr}) to
\[
\rcr(n) \geq\frac{277}{729}\binom{n}{4}+\Theta\left( n^{3}\right)
\geq0.37997\binom{n}{4}+\Theta\left(  n^{3}\right)  .
\]

\end{enumerate}

In Section \ref{constructions}, and to complement item 2 above, we
show that Inequality (\ref{lower}) is tight for $k<\left\lceil
(4n-11)/9\right\rceil $. More precisely, we construct sets of points
simultaneously achieving equality in Inequality (\ref{lower}) for
all $k<\left\lceil (4n-11)/9\right\rceil $.

Several results of this paper appeared (without proofs) in the
conference proceedings of LAGOS'07 \cite{AFLS2, AFLS3}.

\section{\label{allowableseq}Allowable sequences and generalized
configurations of points}

Any set $P$ of $n$ points in the plane can be encoded by a sequence
of permutations of the set $[  n]  =\{ 1,2,...,n\} $ as follows.
Consider a directed line $l$. Orthogonally project $P$ onto $l$ and
label the points of $P$ from $1$ to $n$ according to their order in
$l$. In this order, the identity permutation $( 1,2,...,n)  ,$ is
the first permutation of our sequence. Note that $l$ can be chosen
so that none of the projections overlap. Continuously rotate $l$
counterclockwise. The order of the projections of $P$ onto $l$
changes every time two projections overlap, that is, every time a
line through two points of $P$ becomes perpendicular to $l$. Each
time this happens, a new permutation is recorded as part of our
sequence. After a $180^{\circ}$-rotation of $l$ we obtain a sequence
of $\binom{n}{2}+1$ permutations such that the first permutation $(
1,2,...,n)  $ is the identity,
 the last permutation $(  n,n-1,...,2,1)$ is the reverse of the
identity, any two consecutive permutations differ by a transposition
of adjacent elements, and any pair of points (labels $1,...,n$)
transpose exactly once. This sequence is known as a
\emph{halfperiod} \emph{of the circular sequence }associated to $P$.
The \emph{circular sequence} of $P$ is then a doubly infinite
sequence of permutations obtained by rotating $l$ indefinitely in
both directions.

As an abstract generalization of a circular sequence, a \emph{simple
allowable sequence }on $[n]  $ is a doubly infinite sequence $\Pi=(
...,\pi_{-1},\pi_{0},\pi_{1},...)  $ of permutations of $[ n] $,
such that any two consecutive permutations $\pi_{i}$ and $\pi_{i+1}$
differ by a transposition $\tau(\pi_{i})$ of neighboring elements,
and such that for every $j$, $\pi_{j}$ is the reverse permutation of
$\pi_{j+\binom{n}{2}}$. A \emph{halfperiod} of $\Pi$ is a sequence
of $\binom{n}{2}+1$ consecutive permutations of $[ n] $. As before,
any halfperiod of $\Pi$ uniquely determines $\Pi$ and all properties
for halfperiods mentioned above still hold. Moreover, the halfperiod
$\pi=( \pi_{i},\pi_{i+1},...,\pi_{i+\binom{n}{2}})  $ is completely
determined by the transpositions
$\tau(\pi_{i}),\tau(\pi_{i+1}),\ldots,\tau(\pi_{i+\binom{n}{2}-1}).$
Note that the sequence $(\ldots,\tau(\pi_{-1}), \tau(\pi_0),
\tau(\pi_1)\ldots)$ is $\tbinom{n}{2}$-periodic. Thus we
indistinctly refer to $\pi$ as a sequence of permutations or as a
sequence of (suitable) transpositions. Allowable sequences that are
the circular sequence of a set of points are called
\emph{stretchable}.

A \emph{pseudoline} is a curve in $\mathbb{P}^{2}$, the projective plane,
whose removal does not disconnect $\mathbb{P}^{2}$. Alternatively, a
pseudoline is a simple curve in the plane that extends infinitely in both
directions. A \emph{simple generalized configuration} \emph{of points}
consists of a set of $\binom{n}{2}$ pseudolines and $n$ points in the plane
such that each pseudoline passes through exactly two points, and any two pseudolines
intersect exactly once.

Circular and allowable sequences were first introduced by Goodman and Pollack
\cite{GP80}. They proved that not every allowable sequence is stretchable and
established a correspondence between allowable sequences\emph{ }and
generalized configurations of points.

The three problems at hand can be extended to generalized
configurations of points, or equivalently, to simple allowable
sequences. In this new setting, a transposition of two points in
positions $k$ and $k+1$, or $n-k$ and $n-k+1$ in a simple allowable
sequence $\Pi$ corresponds to a $(  k-1) $-edge. We say that such
transposition is a $k$-transposition, or respectively, a $( n-k)
$-transposition, and if $1\leq k\leq n/2$ all these transpositions
are called $k$\emph{-critical}. Therefore $E_{k}( \Pi)  ,$ $E_{\leq
k}(  \Pi)  ,$ and $E_{\geq k}( \Pi)  $ correspond to the number of
$( k+1) $-critical, $(  \leq k+1)  $-critical, and $( \geq k+1)
$-critical transpositions in any halfperiod of $\Pi$. A halving line
of $\Pi$ is a $\lfloor n/2\rfloor $-transposition, and thus $h( \Pi)
=E_{\lfloor n/2\rfloor -1}( \Pi) $. Identity
(\ref{crossingsvsksets}), which relates the number of $k$-edges to
the crossing number, was originally proved for allowable sequences.
In this setting, a \emph{pseudosegment} is the segment of a
pseudoline joining two points in a generalized configuration of
points, and $\crg( \Pi)  $ is the number of pseudosegment-crossings
in the generalized configuration of points that corresponds to the
allowable sequence $\Pi$. All these definitions and functions
coincide with their original counterparts for $P$ when $\Pi$ is the
circular sequence of $P$. However, when $\rcr(n),$ $h( n) ,$ and
$E_{\leq k}( n)  $ are minimized or maximized over all allowable
sequences on $[ n] $ rather than over all sets of $n$ points, the
corresponding quantities may change and therefore we use the
notation $\pscr(n),\widetilde{h}(n),$ and $\widetilde{E}_{\leq k}(
n) $. Because $n$-point sets correspond to the stretchable simple
allowable sequences on $[n]$, it follows that $\pscr(n)\leq\rcr(n)$,
$\widetilde{h}(n)\geq h(n)$, and $\widetilde {E}_{\leq k}( n) \leq
E_{\leq k}( n)  .$ Tamaki and Tokuyama \cite{TT} extended Dey's
upper bound for allowable sequences to $\widetilde{h}(n)=O(n^{4/3})$
. \'{A}brego et al. \cite{ABFLS2} proved that the lower bound for
$E_{\leq k}( n)$ in Inequality (\ref{lower}) is also a lower bound
on $\widetilde{E}_{\leq k}( n) $. They used this bound to extend
(and even slightly improve) the corresponding lower bound on
$\rcr(n)$ to $\pscr(n)$.

Our main result, Theorem \ref{main} in Section \ref{proof central
theorem}, concentrates on the central behavior of allowable
sequences. We bound $E_{\geq k}(  \Pi)  $ by a function of $E_{k-1}(
\Pi)  $. As a consequence, we improve (or match) the upper bounds on
$\widetilde{h}(n)$ for $n\leq27,$ and thus the lower bounds on
$\pscr(n)$ in the same range. This is enough to match the
corresponding best known geometric constructions \cite{A} for $h( n)
$ and $\rcr(n)$. This shows that for all $n\leq27,$
$\widetilde{h}(n)=h(  n)  $ and $\pscr(n)=\rcr(n)$ whose exact
values are summarized in Table \ref{newvalues}.

\section{\label{proof central theorem}The Central Theorem}

In this section, we present our main theorem. Given a halfperiod
$\pi=( \pi_{0},\pi_{1},\pi_{2},...,\pi_{\binom{n}{2}})  $ of an
allowable sequence and an integer $1\leq k < n/2$, the
$k$\emph{-center} of the permutation $\pi_{j}$, denoted by
$C(k,\pi_{j})  $, is the set of elements in the middle $n-2k$
positions of $\pi_{j}$. Let $L_{0},C_{0},$ and $R_{0}$ be the set of
elements in the first $k$, middle $n-2k$, and last $k$ positions,
respectively, of the permutation $\pi_{0}$. Define
\[
s\left(  k,\pi\right)  =\min\left\{  \left\vert C_{0}\cap C\left(  k,\pi
_{i}\right)  \right\vert :0\leq i\leq\binom{n}{2}\right\}  .
\]
Note that $s(  k,\pi)  \leq n-2k-1$ because at least one of the
$n-2k$ elements of $C_{0}$ must leave the $k$-center.

\begin{theorem}
\label{main}Let $\Pi$ be an allowable sequence on $[  n]  $ and
$\pi$ any halfperiod of $\Pi$. If $s=s(k,\pi)  $, then
\[
E_{\geq k}\left(  \Pi\right)  \leq\left(  n-2k-1\right)
E_{k-1}\left( \Pi\right)  -\frac{s}{2}\left(  E_{k-1}\left(
\Pi\right)-n+1 \right)  .
\]

\end{theorem}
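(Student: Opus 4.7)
The plan is a charging argument associating each internal transposition (those counted by $E_{\ge k}(\Pi)$) with a $k$-critical boundary transposition (those counted by $E_{k-1}(\Pi)$). For each pair $\{a,b\}$ whose unique transposition occurs with both elements in the $k$-center at time $t_{ab}$, I would define its \emph{backward key event} as the most recent $k$-critical transposition $\tau$ strictly before $t_{ab}$ at which one of $a,b$ enters the $k$-center. A symmetric \emph{forward key event} is defined as the earliest $k$-critical transposition after $t_{ab}$ at which one of $a,b$ exits the $k$-center.

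First I would establish a per-event bound: if $\tau$, between $\pi_{j^*}$ and $\pi_{j^*+1}$, has entering element $e$ and exiting element $e'$, then any pair $\{e,b\}$ charged to $\tau$ must satisfy $b\in C(k,\pi_{j^*})\setminus\{e'\}$ and $b$ must remain in the $k$-center until $\{e,b\}$ swap internally. Since each pair transposes only once and $|C(k,\pi_{j^*})\setminus\{e'\}|=n-2k-1$, at most $n-2k-1$ internal transpositions are charged to any single boundary transposition, so the set $A$ of backward-charged pairs satisfies $|A|\le (n-2k-1)E_{k-1}(\Pi)$.

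Next I would characterise the uncharged pairs: a pair is uncharged backward iff $\{a,b\}\subseteq C_0$ and neither element leaves the $k$-center on $[0,t_{ab}]$, and symmetrically for forward. Thus a pair is uncharged in both directions iff $\{a,b\}\subseteq S$, where $S=\{e\in C_0: e\in C(k,\pi_j)\text{ for all }j\}$ is the set of \emph{permanent} elements. Since $S\subseteq C_0\cap C(k,\pi_{j^*})$ for the $j^*$ realising the minimum in $s(k,\pi)$, we have $|S|\le s$, so the number of doubly uncharged pairs is at most $\binom{s}{2}$.

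To extract the improvement $-\tfrac{s}{2}(E_{k-1}(\Pi)-n+1)$, I would refine the per-event bound by exploiting the permanent elements. Each $e\in S$, being always in the $k$-center and never exiting, appears as a candidate partner at every one of the $E_{k-1}(\Pi)$ boundary transpositions, yet $e$ can be charged at most $n-1$ times because $e$ participates in only $n-1$ transpositions altogether. Summing the resulting ``slot-waste'' of at least $E_{k-1}(\Pi)-(n-1)$ over the $|S|\le s$ permanent elements, and symmetrising between the backward and forward charging so that each pair is counted with its appropriate multiplicity (the factor $\tfrac{1}{2}$ arising because each pair has two endpoints and the slot-waste counts pair--element incidences), yields the claimed correction. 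The main obstacle will be this last bookkeeping step: turning the aggregate slot-waste inequality into the exact coefficient $-\tfrac{s}{2}$ together with the additive $+\tfrac{s(n-1)}{2}$ term, which requires a careful amortisation of the backward and forward charges against the $\binom{S}{2}$ doubly uncharged pairs and the pairs with an $S$-partner that are charged in only one direction.
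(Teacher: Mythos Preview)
Your charging framework is natural and the per-event bound of $n-2k-1$ is correct, but there are two genuine gaps that prevent the plan from reaching the stated inequality.

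First, the parameter you actually control is $|S|$, the number of \emph{permanent} elements of $C_0$, and in general $|S|$ can be strictly smaller than $s=s(k,\pi)$. (For instance, the elements of $C_0$ may leave and return one at a time, so that $|C_0\cap C(k,\pi_i)|\ge s$ always holds while no single element stays throughout.) Your slot-waste argument requires each counted element to be a candidate at \emph{every} boundary event, which is exactly permanence; so even if the bookkeeping closes, you obtain the bound with $|S|$ in place of $s$. When $E_{k-1}(\Pi)\ge n-1$ --- the case that matters for all downstream applications --- this is strictly weaker than the theorem. The paper avoids this by never isolating permanent elements: it uses only the fact that at every moment there are at least $s$ elements of $C_0$ inside the $k$-center, which feeds directly into per-block weight bounds.

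Second, the ``symmetrising'' step you flag as the main obstacle is not just bookkeeping: the backward-uncharged pairs (those $\{a,b\}\subseteq C_0$ with neither element having left before their swap) are not controlled by your ingredients, and averaging backward and forward charges does not cancel them against the slot-waste in any evident way. The paper handles this region by a genuinely different mechanism: it first rearranges $\pi$ so that every internal transposition in a block involves the element that just entered (making your backward charge exact block-by-block), then classifies the $K$ boundary transpositions into arriving (augmenting/neutral), returning, cutting, and stalling types, and bounds the weight of each type separately. The $-\tfrac{s}{2}(K-n+1)$ correction ultimately comes from two inequalities with no analogue in your outline: a count $2C\le 4k+K-n+s$ on cutting transpositions, and a bipartite-graph argument pairing $m$-augmenting transpositions with heavy stalling ones. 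Without some substitute for these, the factor $\tfrac{s}{2}$ and the additive $\tfrac{s(n-1)}{2}$ do not emerge from slot-waste alone.
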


\begin{proof} For presentation purposes, we divide this proof into subsections.\medskip

Let $\Pi$ be an allowable sequence on $[n]  $ and $\pi=
(\pi_{0},\pi_{1},\pi_{2},...,\pi_{\binom{n}{2}})  $ any halfperiod
of $\Pi$, $s=s(  k,\pi)  $, and $K=E_{k-1}( \pi) $.

Suppose that $\pi_{i_{1}},\pi_{i_{2}},...,\pi_{i_{K}}$ is the
subsequence of permutations in $\pi$ obtained when the $k$-critical
transpositions
$\tau(\pi_{i_{1}}),\tau(\pi_{i_{2}}),...,\tau(\pi_{i_{K}})$ of $\pi$
occur (in this order). For simplicity we write $\tau_j$ instead of
$\tau(\pi_{i_j})$. These permutations partition $\pi$ into $K+1$
parts $B_{0}( \pi) ,$ $B_{1}( \pi)  ,$ $B_{2}( \pi) ,$ $...,$
$B_{K}( \pi) $
called \emph{blocks}, where $B_{j}(  \pi)  =\{  \pi_{l}%
:i_{j}\leq l<i_{j+1}\}  $ for $1\leq j\leq K-1$, $B_{0}( \pi) =\{
\pi_{l}:0\leq l<i_{1}\}  $, and $B_{K}( \pi)  =\{ \pi_{l}:i_{K}\leq
l\leq\binom{n}{2}\} $. Denote by $p_{j}$ the point that enters the
$k$-center of $\pi_{i_{j}}$ with $\tau_{j}$. We say that a $( \geq
k+1) $-critical transposition in $B_{j}( \pi) ,1\leq j\leq K,$ is an
\emph{essential} transposition if it involves $p_{j}$ or if it
occurs before $\tau_{1}$, and a \emph{nonessential} transposition
otherwise.

\begin{figure}[hp]
\begin{center}
\includegraphics[
height=2.911in, width=5.0825in ] {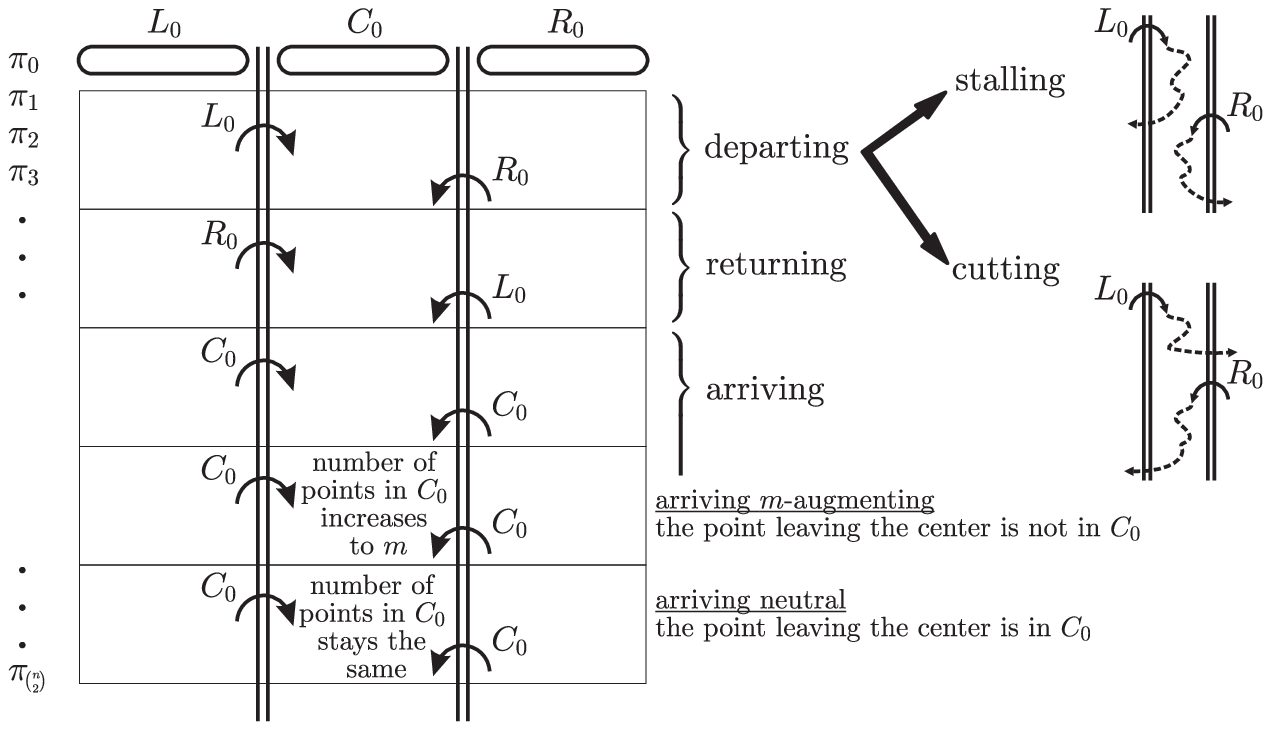}
\caption{Classification of essential $k$-critical transpositions.}
\label{fig:classification}
\end{center}
\end{figure}

\subsubsection*{Rearrangement of $\pi$}

We claim that, to bound $E_{\geq k}(  \Pi)  $, we can assume that
all $(  \geq k+1)  $-critical transpositions of $\pi$ are essential
transpositions. To show this, in case $\pi$ has nonessential
transpositions, we modify $\pi$ so that the obtained halfperiod
$\lambda$ satisfies $E_{j}(  \pi) =E_{j}( \lambda)  $ for all $j<k$,
and thus $E_{\geq k}( \pi) =E_{\geq k}( \lambda) $; and either
$\lambda$ has only essential transpositions or the last nonessential
transposition of $\lambda$ occurs in an earlier permutation than the
last nonessential transposition of $\pi$. Applying this procedure
enough times, we end with a halfperiod $\lambda$ all of whose $(
\geq k+1) $-critical transpositions are essential and such that
$E_{j}( \pi) =E_{j}( \lambda)  $ for all $j\leq k$, and thus
$E_{\geq k}( \pi)  =E_{\geq k}( \lambda) $.

This is how $\lambda$ is constructed. Suppose $B_{j}(  \pi)  $ is
the last block of $\pi$ that contains nonessential transpositions.
Define $\lambda$ as the halfperiod that coincides with $\pi$
everywhere except for the $(  \geq k+1) $-transpositions in $B_{j}(
\pi)  $. All nonessential transpositions in $B_{j}( \pi) $ take
place right before $\tau_{j}$ in $\lambda$, and right after
$\tau_{j}$ occurs, all essential transpositions in $B_{j}( \pi)  $
occur consecutively in $B_{j}( \lambda)  $ but probably in a
different order than in $B_{j}( \pi)  $, so that the final position
of $p_{j}$ is the same in $B_{j}(  \pi)  $ and $B_{j}( \lambda)  $.
Note that in fact the last permutations of the blocks $B_{j}( \pi) $
and $B_{j}( \lambda)  $ are equal.

\subsubsection*{Classification of $k$-critical transpositions}

From now on, we assume that $\pi$ only has essential transpositions.
We classify the $k$-critical transpositions as follows (see Figure
\ref{fig:classification}): $\tau_{j}$ is an \emph{arriving}
transposition if $p_{j}\in C_{0}$. An arriving transposition is
$m$\emph{-augmenting} if it increments the number of elements in
$C_{0}$ in the $k$-center from $m-1$ to $m$, and it is \emph{neutral
}otherwise. We say that $\tau_{j}$ is a \emph{returning}
transposition if it is a $k$-transposition and $p_{j}\in R_{0}$, or
if it is an $(n-k)$-transposition and $p_{j}\in
L_{0}$. That is, $p_{i}$ is \textquotedblleft getting back\textquotedblright%
\ to its starting region. Similarly, $\tau_{j}$ is a
\emph{departing} transposition if it is a $k$-transposition and
$p_{j}\in L_{0}$, or if it is an $(n-k)$-transposition and $p_{j}\in
R_{0}$. That is, $p_{j}$ is \textquotedblleft getting
away\textquotedblright\ from its original region. We say that a
departing transposition $\tau_{j}$ is a \emph{cutting
}transposition, if $\tau_{j}$ is a $k$-transposition and the next
$k$-critical transposition that involves $p_{j}$ is an
$(n-k)$-transposition; or if $\tau_{i}$ is an $(n-k)$-transposition
and the next $k$-critical transposition that involves $p_{j}$ is a
$k$-transposition. All other departing transpositions are called
\emph{stalling}.

Finally, we define the \emph{weight} of a $k$-critical transposition
$\tau _{j}$, denoted by $w(\tau_{j})$, as the number of $( \geq k+1)
$-critical transpositions in $B_{j}( \pi)  $ that are not between
two elements of $C_{0}$. Transpositions with weight at most
$n-2k-1-s$ are called \emph{light}. All other transpositions are
\emph{heavy}.

Let $A,N,R,C,S_{\text{light}}$, and $S_{\text{heavy}}$ be the number of
augmenting, neutral, returning, cutting, light stalling, and heavy stalling
transpositions, respectively. Then $K=A+N+R+C+S_{\text{light}}+S_{\text{heavy}%
}$.

\subsubsection*{Bounding $E_{\geq k}(  \Pi)  $}

Observe that the $k$-center of all permutations in $B_{0}( \pi)  $
remains unchanged. It follows that all $(  \geq k+1) $-critical
transpositions of $B_{0}( \pi)  $ are between elements of $C_{0}$.
Thus $\sum_{j=1}^{K}w(  \tau_{j})  $ counts all $(  \geq k+1)
$-critical transpositions except those between two elements of
$C_{0}$. There are $\binom{n-2k}{2}$ transpositions between elements
of $C_{0}$, but each neutral transposition corresponds to a
$k$-critical (not $( \geq k+1)  $-critical) transposition between
two elements of $C_{0}$. Thus
\begin{equation}
E_{\geq k}(  \Pi)  \leq\binom{n-2k}{2}-N+\sum\limits_{j=1}^{K}w(
\tau_{j}). \label{degrees inequality}
\end{equation}

\subsubsection*{Bounds for the weight of a $k$-critical transposition}

We bound the weight of a transposition depending on its class
(departing, returning, etc.), as well as the number of
transpositions within a class, if necessary. For $j \geq 1$ all $(
\geq k+1)  $-critical transpositions in $B_{j}(  \pi)  $ involve
$p_{j}$ and thus $w(  \tau _{j})  \leq n-2k-1.$ However, since the
weight of $\tau_{j}$ does not count transpositions between two
elements of $C_{0}$, and there are always at least $s$ elements of
$C_{0}$ in the $k$-center, then $w( \tau _{j})  \leq n-2k-s$
whenever $\tau_{j}$ is arriving (because $p_{j}\in C_{0}$).
Moreover, if $\tau_{j}$ is $m$-augmenting, then $w(  \tau _{j}) \leq
n-2k-m$. If $\tau_{j}$ is a returning transposition, then $p_{j}$
has already been transposed with all the elements of $C_{0}$ that
are in the $k$-center of $\pi_{i_{j}}$. Since there are at least $s$
such elements, then $w(  \tau_{j})  \leq n-2k-1-s$. Summarizing,
\begin{equation}
w\left(  \tau_{j}\right)  \leq\left\{
\begin{array}
{ll}%
n-2k-1&\text{for all }\tau_{j}\text{,}\\
n-2k-s,&\text{if }\tau_{j}\text{ is neutral,}\\
n-2k-m,&\text{if }\tau_{j}\text{ is }m\text{-augmenting,}\\
n-2k-1-s,&\text{if }\tau_{j}\text{ is light stalling or returning.}%
\end{array}
\right.  \label{bounds on weights}%
\end{equation}

\subsubsection*{Bounding $C$}

We bound the number of cutting transpositions. Since the first
(last) $k$ elements of $\pi_{0}$ are the last (first) elements of
$\pi_{\binom{n}{2}}$, then the $2k$ elements not in $C_{0}$ must
participate in at least one cutting transposition. That is,
$C\geq2k$. Note that, if $p\notin C_{0}$ participates in $c\geq2$
cutting transpositions, then there must be at least $c-1$ returning
transpositions of $p$. In other words, there must be at least
$C-2k\geq0$ returning transpositions. There are $C$ cutting
transpositions and at least $n-2k-s$ arriving transpositions (at
least one $m$-augmenting arriving transposition for each $s+1\leq
m\leq n-2k$). Then $K-C-(  n-2k-s)  $ counts all other $k$-critical
transpositions, including in particular all returning
transpositions. Thus $K-C-(
n-2k-s)  \geq C-2k$, that is,%
\begin{equation}
2C \leq 4k+K-n+s. \label{bounding crossing transp}%
\end{equation}

\subsubsection*{Augmenting and heavy stalling transpositions}

We keep track of the augmenting and heavy stalling transpositions
together. To do this, we consider the bipartite graph $G$ whose
vertices are the augmenting and the heavy stalling transpositions.
The augmenting transposition $\tau_{l}$ is adjacent in $G$ to the
heavy stalling transposition $\tau_{j}$ if $j<l$, $p_{j}$ is in the
$k$-center of all permutations in blocks $B_{j}$ to $B_{l}$, one
transposition from $\tau_{j}$ and $\tau_{l}$ is a $k$-transposition
and the other is an $(  n-k)  $-transposition, and $p_{l}$ does not
swap with $p_{j}$ in $B_{l}(  \pi)  $. We bound the degree of a
vertex in $G$.

Let $\tau_{j}$ be a heavy stalling transposition. If $p_{j}\in
L_{0}$ (the case $p_{j}\in R_{0}$ is equivalent), then $\tau_{j}$ is
a $k$-transposition. Because $p_{j}$ moves to the right exactly $w(
\tau_{j})>n-2k-1-s$ positions within $B_{j}( \pi)  $, it follows
that the $k$-center right before $\tau_{j+1}$ occurs (i.e., the
$k$-center of $\pi_{i_{j+1}-1}$) has at most $n-2k-1-w( \tau_{j})
<s$ points of $C_{0}$ to the right of $p_{j}$. Also, since
$\tau_{j}$ is stalling, the next time that $p_{j}$ leaves the
$k$-center is by a $k$-transposition $\tau_{j+a}$. This means that
the $k$-center right before $\tau_{j+a}$ occurs (i.e., the
$k$-center of $\pi_{i_{j+a}-1}$) has at least $s$ points of $C_{0}$
to the right of $p_{j}$. Thus, between $\tau_{j}$ and $\tau_{j+a}$
there must be at least $s-( n-2k-1-w( \tau_{j})  ) $ arriving $(
n-k) $-transpositions $\tau_{l}$ such that $p_{l}$ remains to the
right of $p_{j}$ in $B_{l}(  \pi)  $, i.e., $p_{l}$ does not swap
with $p_{j}$ in $B_{l}(  \pi)  $. These transpositions are adjacent
to $\tau_{j}$ and thus the degree of $\tau_{j}$ in $G$ is at least
$w( \tau_{j}) -(  n-2k-1-s)  $. Hence,
\[
\left\vert E(G)  \right\vert \geq\sum\limits_{\tau_{j}\text{ heavy
stalling}}\left(  w\left(  \tau_{j}\right)  -\left( n-2k-1-s\right)
\right)  ,
\]
where $E(G)$ is the set of edges of $G$.

Let $\tau_{l}$ be an $m$-augmenting transposition. Since $p_{l}\in
C_{0}$, and weights do not count transpositions between two elements
of $C_{0}$, then at most $n-2k-m-w(  \tau_{l})  $ points in
$L_{0}\cup R_{0}$ do not swap with $p_{l}$ in $B_{l}( \pi)  $. Only
these points are possible $p_{j}$s such that $\tau_{j}$ is adjacent
to $\tau_{l}$. Thus the degree of $\tau_{l}$ in $G$ is at most
$n-2k-m-w(  \tau_{l})  \leq n-2k-1-s-w( \tau_{l}) $.

Note that there is at least one $m$-augmenting transposition for
each $s+1\leq m\leq n-2k$. This is because the $k$-center of at
least one permutation of $\pi$ contains exactly $s$ elements of
$C_{0}$ (by definition of $s$), and the $k$-center of
$\pi_{\binom{n}{2}}$ contains exactly $n-2k$ elements of $C_{0}$
(since it coincides with $C_{0}$). Then the number of elements in
the $k$-center must be eventually incremented from $s$ to $n-2k$.
For each $s+1\leq m\leq n-2k$, we use $n-2k-m-w(  \tau_{l})  $ to
bound the degree of \emph{one }$m$-augmenting transposition. For all
other augmenting transpositions we use the bound $n-2k-1-s-w(
\tau_{l})  $. Hence
\begin{align*}
\left\vert E\left(  G\right)  \right\vert  &
\leq\sum\limits_{\tau_{j}\text{ augmenting}}\left(  \left(
n-2k-1-s\right)  -w\left(  \tau_{j}\right)
\right)  -\sum\limits_{m=s+1}^{n-2k}\left(  m-s-1\right) \\
&  =\sum\limits_{\tau_{j}\text{ augmenting}}\left(  \left(
n-2k-1-s\right) -w\left(  \tau_{j}\right)  \right)
-\binom{n-2k-s}{2}.
\end{align*}
The previous two inequalities imply that
\begin{equation}
\sum\limits_{\tau_{j}\text{ augmenting}}w\left(  \tau_{j}\right)
+\sum\limits_{\tau_{j}\text{ heavy stalling}}w\left( \tau_{j}\right)
\leq\left(  n-2k-1-s\right)  \left( A+S_{\text{heavy}}\right)
-\binom {n-2k-s}{2}. \label{augmenting and heavy stall}
\end{equation}

\subsubsection*{Final calculations}

We use inequalities (\ref{bounds on weights}) and (\ref{augmenting
and heavy stall}) to bound $\sum_{i=1}^{K}w(  \tau_{i})  -N$.
\begin{align*}
\sum\limits_{j=1}^{K}w\left(  \tau_{j}\right)  -N &
=\sum\limits_{\tau _{j}\text{ cutting}}w\left(  \tau_{j}\right)
+\sum\limits_{\tau_{j}\text{ augmenting}}w\left(  \tau_{j}\right)
+\sum\limits_{\tau_{j}\text{ heavy
stalling}}w\left(  \tau_{j}\right)  \\
&  +\sum\limits_{\tau_{j}\text{ light stalling}}w\left(
\tau_{j}\right) +\sum\limits_{\tau_{j}\text{ returning}}w\left(
\tau_{j}\right)
+\sum\limits_{\tau_{j}\text{ neutral}}w\left(  \tau_{j}\right)  -N\\
&  \leq\left(  n-2k-1\right)  C+\left(  n-2k-1-s\right)  \left(
A+S_{\text{heavy}}\right)  -\binom{n-2k-s}{2}\\
&  +\left(  n-2k-1-s\right)  \left(  S_{\text{light}}+R\right)  +\left(
n-2k-s\right)  N-N\\
&  \leq sC+\left(  n-2k-1-s\right)  K-\binom{n-2k-s}{2}.
\end{align*}
By Inequality (\ref{degrees inequality}),
\begin{align*}
E_{\geq k}\left(  \Pi\right)   &  \leq\binom{n-2k}{2}-\binom{n-2k-s}
{2}+sC+\left(  n-2k-1-s\right)  K\\
&  =\left(  n-2k-1\right)  K-\frac{s}{2}\left(  2K-2n+4k+1+s-2C\right)  .
\end{align*}
Finally, by Inequality (\ref{bounding crossing transp}),
\[
E_{\geq k}\left(  \Pi\right)  \leq\left(  n-2k-1\right)
K-\frac{s}{2}\left( K-n+1  \right). \qedhere
\]
\end{proof}

\section{\label{halving lines}New exact values for $n\leq27$}

In this section, we give exact values of $h( n) $ and $\widetilde{%
h}( n) $ for $n\leq27$. We start by stating a relaxed version of
Theorem \ref{main}, which we use in the special case when $k=\lfloor
n/2\rfloor -1$.

\begin{corollary}
\label{coro: maxs}Let $\Pi $ be a simple allowable sequence on $[ n]
$ and $\pi $ any halfperiod of $\Pi $. If $s=s( k,\pi ) $, then
\begin{equation*}
E_{\geq k}\left( \Pi \right) \leq \left( n-2k-1\right) E_{k-1}\left(
\Pi \right) +\binom{s}{2}\leq \left( n-2k-1\right) E_{k-1}\left( \Pi
\right) + \binom{n-2k-1}{2}.
\end{equation*}
\end{corollary}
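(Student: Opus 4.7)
The plan is to derive the first inequality directly from Theorem~\ref{main} by supplying a lower bound $E_{k-1}(\Pi) \geq n - s$ that is not made explicit there, and then to obtain the second inequality from the already-noted bound $s \leq n-2k-1$.

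Write $K := E_{k-1}(\Pi)$, which equals the number of $k$-critical transpositions in the halfperiod $\pi$. The main step is a boundary-crossing count. For each element $i \in [n]$, let $f_i$ denote the number of $k$-critical transpositions of $\pi$ that involve $i$; equivalently, $f_i$ counts the number of times the position trajectory of $i$ crosses between positions $k$ and $k+1$ or between positions $n-k$ and $n-k+1$. Since every $k$-critical transposition involves exactly two elements, $2K = \sum_{i=1}^n f_i$. For each $i \in L_0$, the trajectory starts at a position in $\{1, \dots, k\}$ in $\pi_0$ and ends at a position in $\{n-k+1, \dots, n\}$ in $\pi_{\binom{n}{2}}$ (the reverse of $\pi_0$), so it must cross both the $k/(k+1)$ boundary and the $(n-k)/(n-k+1)$ boundary at least once, giving $f_i \geq 2$. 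The symmetric argument gives $f_i \geq 2$ for every $i \in R_0$. Finally, choose $i^*$ with $|C_0 \cap C(k, \pi_{i^*})| = s$: each of the $n-2k-s$ elements of $C_0 \setminus C(k,\pi_{i^*})$ begins in the $k$-center, must have left it by step $i^*$, and must return to it by $\pi_{\binom{n}{2}}$, so $f_i \geq 2$ for each of these as well. Summing,
$$2K \;\geq\; 2k + 2k + 2(n-2k-s) \;=\; 2(n-s),$$
so $K \geq n-s$.

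The rest is algebra. From $K \geq n-s$ we get $K - n + 1 \geq 1 - s$, and multiplying by $-s/2 \leq 0$ reverses the inequality:
$$-\frac{s}{2}(K - n + 1) \;\leq\; -\frac{s}{2}(1 - s) \;=\; \binom{s}{2}.$$
Combining with Theorem~\ref{main},
$$E_{\geq k}(\Pi) \;\leq\; (n-2k-1)K - \frac{s}{2}(K - n + 1) \;\leq\; (n-2k-1)K + \binom{s}{2},$$
which is the first inequality. The second inequality follows because $s \leq n-2k-1$ (as observed just before Theorem~\ref{main}) and $\binom{x}{2}$ is nondecreasing for $x \geq 0$.

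The only substantive obstacle is the counting estimate $K \geq n-s$; everything else is a short algebraic consequence of Theorem~\ref{main} and the monotonicity of $\binom{\cdot}{2}$.
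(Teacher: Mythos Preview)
Your proof is correct and follows essentially the same approach as the paper: both establish the key inequality $E_{k-1}(\Pi)\ge n-s$ and then combine it with Theorem~\ref{main} and the bound $s\le n-2k-1$. The only cosmetic difference is that the paper obtains $E_{k-1}(\Pi)\ge n-s$ by directly counting at least $n-2k-s$ arriving and at least $2k$ departing $k$-critical transpositions, whereas you reach the same conclusion via a per-element boundary-crossing count summing to $2K$.
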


\begin{proof} There are at least $n-2k-s$ elements of $C_{0}$ that leave the
$k$-center, so there are at least $n-2k-s$ arriving transpositions.
In addition, there are at least $ 2k$ departing transpositions, one
per element not in $C_{0}$. It follows that $E_{k-1} ( \Pi) \geq2k+(
n-2k-s) =n-s$. The first inequality now follows directly from
Theorem \ref{main}. Finally, $ s\leq n-2k-1$ for all halfperiods of
$\Pi$ which yields the second inequality. Another consequence is
that $E_{k-1} ( \Pi) \geq n-s \geq 2k+1$, which is in fact the
minimum possible value of $E_{k-1}$ (cf. \cite{LVWW}).
\end{proof}

The previous corollary implies the following result for halving
lines.

\begin{corollary}
\label{coro: halving}If $\Pi $ is a simple allowable sequence on $[ n%
] $ and $n\geq 8$, then%
\begin{equation*}
h\left( \Pi \right) \leq \left\{
\begin{array}{ll}
\left\lfloor \frac{1}{24}n(n+30)-3\right\rfloor &\text{ if }n\text{
is
even,}\vspace{0.1in} \\
\left\lfloor \frac{1}{18}(n-3)(n+45)+\frac{1}{9}\right\rfloor
&\text{ if }n \text{ is odd.}
\end{array}%
\right.
\end{equation*}
\end{corollary}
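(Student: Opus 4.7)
The plan is to apply Corollary \ref{coro: maxs} at the extreme level $k=\lfloor n/2\rfloor-1$, combine it with the elementary identity $E_{\leq \lfloor n/2\rfloor-1}(\Pi)=\binom{n}{2}$ to rewrite $E_{k-1}(\Pi)$ in terms of $h(\Pi)$ and $E_{\leq k-2}(\Pi)$, and then eliminate $E_{\leq k-2}(\Pi)$ using the allowable-sequence version of the Aichholzer--Garc\'{\i}a--Orden--Ramos bound in (\ref{lower}), which is available via \cite{ABFLS2}.

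Concretely, for $k=\lfloor n/2\rfloor-1$ one has $E_{\geq k}(\Pi)=E_k(\Pi)=h(\Pi)$, while $n-2k-1$ equals $1$ for even $n$ and $2$ for odd $n$. Corollary \ref{coro: maxs} therefore specializes to
\[
h(\Pi)\leq (n-2k-1)\,E_{k-1}(\Pi)+\binom{n-2k-1}{2}.
\]
Since every edge is a $j$-edge for some $0\leq j\leq\lfloor n/2\rfloor-1$, we have $E_{\leq k-1}(\Pi)=\binom{n}{2}-h(\Pi)$, and hence $E_{k-1}(\Pi)=\binom{n}{2}-h(\Pi)-E_{\leq k-2}(\Pi)$. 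Substituting and isolating $h(\Pi)$ on one side yields
\[
(n-2k)\,h(\Pi)\leq (n-2k-1)\!\left(\binom{n}{2}-E_{\leq k-2}(\Pi)\right)+\binom{n-2k-1}{2}.
\]
Lower-bounding $E_{\leq k-2}(\Pi)$ by (\ref{lower}) and dividing by $n-2k\in\{2,3\}$ produces an explicit numerical upper bound on $h(\Pi)$ depending only on $n$.

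It remains to reduce this expression to the stated closed form. Because (\ref{lower}) involves both $\lfloor n/3\rfloor$ and the remainder $n-3\lfloor n/3\rfloor$, the simplification naturally splits into the six residue classes of $n$ modulo $6$. In each class one expands the binomial coefficients, collects polynomial terms, and takes the floor to verify that the derived bound coincides with $\lfloor\tfrac{1}{24}n(n+30)-3\rfloor$ when $n$ is even and with $\lfloor\tfrac{1}{18}(n-3)(n+45)+\tfrac{1}{9}\rfloor$ when $n$ is odd; the fractional correction $\tfrac{1}{9}$ in the odd case comes precisely from the $+1=\binom{n-2k-1}{2}$ term in the corollary divided by the factor $n-2k=3$. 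This case-by-case algebraic reduction, while routine, is the main technical step of the proof; everything else is an immediate consequence of Corollary \ref{coro: maxs} and the basic counting identity for $(\leq k)$-edges.
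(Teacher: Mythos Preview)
Your proposal is correct and follows essentially the same route as the paper: apply Corollary~\ref{coro: maxs} at $k=\lfloor n/2\rfloor-1$, use the identity $E_{\leq\lfloor n/2\rfloor-3}(\Pi)+E_{\lfloor n/2\rfloor-2}(\Pi)+h(\Pi)=\binom{n}{2}$ to eliminate $E_{k-1}(\Pi)$, and then invoke the allowable-sequence version of~(\ref{lower}) to bound $E_{\leq\lfloor n/2\rfloor-3}(\Pi)$ from below. The paper carries out the final arithmetic check over residue classes modulo~$18$ rather than modulo~$6$, but this is only because the target closed forms have denominators $24$ and $18$, so matching the floors requires finer residue information than just $\lfloor n/3\rfloor$ and parity; your outline is otherwise identical.
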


\begin{proof} If $k=\lfloor n/2\rfloor -1$ on Corollary \ref{coro: maxs},
then $ E_{\geq \lfloor n/2\rfloor -1}(\Pi )=h(\Pi )$ and thus $h(\Pi
)\leq (n-2\lfloor n/2\rfloor +1)E_{\geq \lfloor n/2\rfloor -2}(\Pi
)+\binom{ n-2\lfloor n/2\rfloor +1}{2}$, that is,
\begin{equation*}
h\left( \Pi \right) \leq \left\{
\begin{array}{ll}
E_{n/2-2}\left( \Pi \right) &\text{ if }n\text{ is even,} \vspace{0.1in}\\
2E_{\left( n-1\right) /2-2}\left( \Pi \right) +1 &\text{ if }n\text{
is odd.}
\end{array}
\right.
\end{equation*}
Moreover, because $E_{\leq \lfloor n/2\rfloor -3}(\Pi )+E_{\lfloor
n/2\rfloor -2}(\Pi )+h(\Pi )=\binom{n}{2}$, it follows that
\begin{equation*}
h\left( \Pi \right) \leq \left\{
\begin{array}{ll}
\left\lfloor \frac{1}{2}\binom{n}{2}-\frac{1}{2}E_{\leq n/2-3}\left(
\Pi
\right) \right\rfloor &\text{ if }n\text{ is even, } \vspace{0.1in}\\
\left\lfloor \frac{2}{3}\binom{n}{2}-\frac{2}{3}E_{\leq \left(
n-1\right) /2-3}\left( \Pi \right) +\frac{1}{3}\right\rfloor &\text{
if }n\text{ is odd.}
\end{array}%
\right.
\end{equation*}%
The bound in Inequality (\ref{lower}) is also valid in the more
general context of allowable sequences \cite{ABFLS2}. Using this
bound for $E_{\leq k}(\Pi )$ when $k=\lfloor n/2\rfloor -3$, and
considering all residue
classes of $n$ modulo 18 with $n\geq 8$, it follows that $\lfloor \frac{1}{2}%
\binom{n}{2}-\frac{1}{2}E_{\leq n/2-3}( \Pi ) \rfloor \leq
\lfloor n(n+30)/24-3\rfloor $ when $n$ is even, and $\lfloor \frac{2}{3}%
\binom{n}{2}-\frac{2}{3}E_{\leq ( n-1) /2-3}( \Pi ) +%
\frac{1}{3}\rfloor \leq \lfloor (n-3)(n+45)/18+1/9\rfloor $ when $n$
is odd.
\end{proof}

Because $h(n)\leq cn^{4/3}$, the inequality in Corollary \ref{coro:
halving} is only useful for small values of $n$. However, even with
the current best constant $c=(31287/8192)^{1/3}<1.5721$ \cite{AA,
PRTT}, our bound is better when $n$ is even in the range $8\leq
n\leq 184$.

The exact values of $h(n)$ were previously known only for even $n
\le 14$ or odd $n\le 21$ \cite{AA, BR}. The exact values of $\rcr
(n)$ were previously known only for even $n\leq 18$ or odd $n\leq
21$ \cite{AGOR}. The values in Table \ref{newvalues} correspond to
the upper bounds obtained by Corollary \ref{coro: halving} when $n$
is even, $14\leq n\leq 26$ or $n$ is odd, $23\leq n\leq 27$. We also
obtained new lower bounds for $\widetilde{ \hbox{\rm cr}}( n) $ in
this range of values of $n$. The identity $E_{\leq \lfloor
n/2\rfloor -2}( \Pi ) =\binom{n}{2} -h(\Pi )$ together with
Corollary \ref{coro: halving} give a new lower bound for $E_{\leq
\lfloor n/2\rfloor -2}( \Pi ) $. Using this bound for $k=\lfloor
n/2\rfloor -2$ and the bound in Inequality (\ref{lower}) for $k\leq
\lfloor n/2\rfloor -3$ in Identity (\ref{crossingsvsksets}) yields
the values in Table \ref{newvalues} for $\widetilde{\hbox{\rm cr}}(
n) $. For example, if $n=24$ then $E_{\leq 10}(\Pi
)=\binom{24}{2}-h(24)\geq 276-51=225$ and by Inequality (\ref
{lower}), the vector $(E_{\leq 0}(\Pi ),E_{\leq 1}(\Pi ),E_{\leq
2}(\Pi ),\ldots ,E_{\leq 9}(\Pi ))$ is bounded below entry-wise by $
(3,9,18,30,45,63,84,108,138,174)$, so Identity
(\ref{crossingsvsksets}) implies that $\widetilde{\hbox{\rm cr}}(
24) =\sum_{k=0}^{10}(21-2k)E_{\leq k}(\Pi
)-\frac{3}{4}\binom{24}{3}\geq 3699$.

All the bounds shown in Table \ref{newvalues} are attained by
Aichholzer's et al. constructions \cite{A}, and thus Table
\ref{newvalues} actually shows
the exact values of $\widetilde{h}( n) $, $h( n) $, $%
\widetilde{\hbox{\rm cr}}( n) $, and $\overline{\hbox{\rm cr}}%
( n) $ for $n$ in the specified range.

For $28\leq n\leq 33$, Table \ref{smallbounds} shows the new reduced
gap
between the lower and upper bounds of $h( n) $ and $\widetilde{h}%
( n) $.

\begin{table}[h]
\begin{center}%
\begin{tabular}
[c]{r|rrrrrr}%
$n$ & $28$ & $29$ & $30$ & $31$ & $32$ & $33$\\\hline
$\overset{}{%
\begin{tabular}
[c]{l}%
$h\left(  n\right)  \geq\smallskip$%
\end{tabular}
}$ & $63$ & $105$ & $69$ & $115$ & $73$ & $126$\\%
\begin{tabular}
[c]{l}%
$\widetilde{h}\left(  n\right)  \leq\smallskip$%
\end{tabular}
& $64$ & $107$ & $72$ & $118$ & $79$ & $130$\\%
\end{tabular}
\end{center}
\caption{Updated bounds for $28\leq n\leq33$}%
\label {smallbounds}
\end{table}

\section{\label{lower bound k-sets}New lower bound for the number of $(
\leq k)  $-edges}

In this section, we obtain a new lower bound for the number of
$\leq$$k$-edges. Our emphasis is on finding the best possible
asymptotic result as well as the best bounds that apply to the small
values of $n$ for which the exact value is unknown. Theorem
\ref{recursive} provides the exact result that can be applied to
small values of $n$, whereas Corollary \ref{explicit} is suitable
enough to give the best asymptotic behavior.

Let $m=\lceil(4n-11)/9\rceil$. For each $n$, define the following
recursive
sequence.%
\begin{align*}
u_{m-1}  &  =3\binom{m+1}{2}+3\binom{m+1-\lfloor n/3\rfloor}{2}-3\left(
m-\left\lfloor \frac{n}{3}\right\rfloor \right)  \left(  \frac{n}%
{3}-\left\lfloor \frac{n}{3}\right\rfloor \right)  \text{ and}\\
u_{k}  &  =\left\lceil \frac{1}{n-2k-2}\left(  \binom{n}{2}+(n-2k-3)u_{k-1}%
\right)  \right\rceil \text{ for }k\geq m\text{.}%
\end{align*}
The following is the new lower bound on $E_{\leq k}(  n)  $. It
follows from Theorem \ref{main}.

\begin{theorem}
\label{recursive}For any $n$ and $k$ such that $m-1\leq
k\leq(n-3)/2$,
\[
E_{\leq k}(n)\geq u_{k}.
\]
\end{theorem}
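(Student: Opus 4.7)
The plan is to argue by induction on $k$, starting at the base $k = m-1$ and climbing up one index at a time via the recursion that defines $u_k$. Since $\widetilde{E}_{\leq k}(n) \leq E_{\leq k}(n)$, it suffices to establish the stronger statement that every allowable sequence $\Pi$ on $[n]$ satisfies $E_{\leq k}(\Pi) \geq u_k$.

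For the base case $k = m-1$, I would invoke Inequality (\ref{lower}), which also holds for allowable sequences by \cite{ABFLS2}. In the relevant range of $n$ one has $m \geq \lfloor n/3\rfloor$, so the $\max\{0,\ldots\}$ in (\ref{lower}) resolves to its nonzero branch, and the elementary identity $3(n/3 - \lfloor n/3\rfloor) = n - 3\lfloor n/3\rfloor$ shows that the right-hand side of (\ref{lower}) at $k = m-1$ coincides exactly with the defined quantity $u_{m-1}$.

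For the inductive step, I would assume $E_{\leq k-1}(\Pi) \geq u_{k-1}$ and derive $E_{\leq k}(\Pi) \geq u_k$. The key input is Theorem \ref{main} applied with index $k+1$ in place of $k$: for any halfperiod $\pi$ of $\Pi$ and $s = s(k+1,\pi)$,
\[
E_{\geq k+1}(\Pi) \leq (n-2k-3)\, E_k(\Pi) - \tfrac{s}{2}\bigl(E_k(\Pi) - n + 1\bigr).
\]
Substituting $E_{\geq k+1}(\Pi) = \binom{n}{2} - E_{\leq k}(\Pi)$ and $E_k(\Pi) = E_{\leq k}(\Pi) - E_{\leq k-1}(\Pi)$ and then solving for $E_{\leq k}(\Pi)$ yields, in the clean case $s = 0$, the inequality
\[
(n-2k-2)\, E_{\leq k}(\Pi) \geq \binom{n}{2} + (n-2k-3)\, E_{\leq k-1}(\Pi),
\]
which combined with the inductive hypothesis, the monotonicity of the ceiling, and the integrality of $E_{\leq k}(\Pi)$ delivers $E_{\leq k}(\Pi) \geq u_k$, exactly matching the recursion defining $u_k$.

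The hard part will be controlling the $s$-correction $-\tfrac{s}{2}(E_k(\Pi) - n + 1)$, which becomes positive (and hence weakens the derived recursion) when $E_k(\Pi) < n - 1$ and $s > 0$. My plan is a case split: whenever $E_k(\Pi) \geq n - 1$ the correction has the right sign and the clean derivation above goes through directly; otherwise I would invoke the auxiliary estimate $E_k(\Pi) \geq n - s$ extracted in the proof of Corollary \ref{coro: maxs} to absorb the leftover terms, and then exploit the ceiling in the definition of $u_k$ to round the resulting slightly weaker inequality back up to $u_k$.
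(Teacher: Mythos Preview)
Your overall architecture matches the paper's: induction on $k$, base case via Inequality~(\ref{lower}), inductive step via Theorem~\ref{main} applied at index $k+1$, and a case split on whether $s=0$ or $E_k(\Pi)\ge n-1$. The clean case is handled exactly as you describe.

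The gap is in the hard case $s>0$ and $E_k(\Pi)<n-1$. Using $E_k(\Pi)\ge n-s$ in the correction term gives (this is precisely Corollary~\ref{coro: maxs})
\[
(n-2k-2)\,E_{\le k}(\Pi)\;\ge\;\binom{n}{2}+(n-2k-3)\,u_{k-1}-\binom{s}{2},
\]
so the deficit relative to the clean recursion is $\binom{s}{2}/(n-2k-2)$. The ceiling can only absorb a deficit strictly less than $1$, but here $s$ may be as large as $n-2k-3$, and for $k$ near $m$ one has $n-2k-2\approx n/9$, so the deficit can be of order $n$. Thus ``rounding back up to $u_k$'' fails badly; the inequality you obtain is not \emph{slightly} weaker.

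The paper handles this case by a different route: from $E_k(\Pi)<n-1$ and $s\le n-2k-3$ it deduces directly that $E_{\ge k+1}(\Pi)\le (n-1)(n-2k-3)$, hence $E_{\le k}(\Pi)\ge\binom{n}{2}-(n-1)(n-2k-3)$, and then invokes two analytical lemmas (Lemmas~\ref{double ineq} and~\ref{estim}) which together show that $\binom{n}{2}-(n-1)(n-2k-3)\ge u_k$. These lemmas give a closed-form two-sided estimate for $\binom{n}{2}-u_k$ in terms of $\sqrt{1-(2k+c)/n}$, and their proofs occupy most of the work in this section. Your plan is missing this ingredient entirely.
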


\begin{proof} We need the following two lemmas to estimate the growth
of the sequence $u_k$ with respect to $n$ and $k$. For presentation
purposes, we defer their proofs to the end of the section.

\begin{lemma}
\label{double ineq}For any $k$ such that $m-1\leq k\leq(n-5)/2,$
\begin{equation}
3\sqrt{1-\frac{2k+9/2}{n}}<\frac{\binom{n}{2}-u_{k}}{\binom{n}{2}-u_{m-1}}
\leq3\sqrt{1-\frac{2k+2}{n}}\text{.} \label{boundingtheus}
\end{equation}
\end{lemma}

\begin{lemma}
\label{estim}For any $k$ such that $m\leq k\leq(n-5)/2$,
\[
3\sqrt{1-\frac{2k+9/2}{n}}\left(  \binom{n}{2}-u_{m-1}\right)  \geq\left(
n-1\right)  \left(  n-2k-3\right)  .
\]
\end{lemma}

We prove the stronger statement $\widetilde{E}_{\leq k}( n) \geq
u_{k}$. Let $\Pi$ be an allowable sequence on $[ n] $ and $\pi$ any
of its halfperiods. We proceed by induction on $k$. If $k=m-1$ the
result holds by Inequality (\ref{lower}), proved in the more general
context of allowable sequences \cite{ABFLS2}. Assume that $k\geq m$
and $E_{\leq k-1}(\Pi)\geq u_{k-1}$. Let $s=s( k+1,\pi) $; by
Theorem \ref{main},
\[
E_{\geq k+1}\left(  \Pi\right)  \leq\left(  n-2k-3\right)  E_{k}\left(
\Pi\right)  -\frac{s}{2}\left(  E_{k}\left(  \Pi\right)  -\left(  n-1\right)
\right)  .
\]
If $s=0$ or $E_{k}(\Pi)\geq n-1,$ then $E_{\geq k+1}(\Pi
)\leq(n-2k-3)E_{k}(\Pi)$. Thus
\[
\binom{n}{2}-E_{\leq k}(\Pi)\leq\left(  n-2k-3\right)  \left(
E_{\leq k} (\Pi)-E_{\leq k-1}(\Pi)\right)  ,
\]
and by induction
\begin{align*}
E_{\leq k}(\Pi) &\geq\frac{1}{n-2k-2}\left( \binom{n}{2}+(n-2k-3)E_{\leq k-1}\left( \Pi\right)  \right)\\
&\geq\frac{1}{n-2k-2}\left( \binom{n}{2}+(n-2k-3)u_{k-1} \right) ,
\end{align*}
which implies that $E_{\leq k}(\Pi)\geq u_{k}$ by definition of
$u_{k}$. Now assume $s>0$ and $E_{k}(\Pi)<n-1$. Because $E_{k}( \Pi)
\geq2k+3$ (see the proof of Corollary \ref{coro: maxs}), it follows
that $k\leq(n-5)/2$. By Theorem
\ref{main},%
\begin{align*}
E_{\geq k+1}(\Pi)  & \leq(n-2k-3)E_{k}(\Pi)-\frac{s}{2}\left(  E_{k}%
(\Pi)-\left(  n-1\right)  \right)  \\
& =(n-2k-3-\frac{s}{2})E_{k}(\Pi)+\frac{s}{2}\left(  n-1\right).
\end{align*}
Recall that $s=s(k+1,\pi)\leq n-2k-3$. Because $E_{k}(\Pi)<n-1$, it
follows that
\begin{align*}
E_{\geq k+1}(\Pi) &  \leq(n-2k-3-\frac{s}{2})(n-1)+\frac{s}{2}\left(
n-1\right)  \\
&  =\left(  n-1\right)  \left(  n-2k-3\right)  \text{.}%
\end{align*}
Therefore%
\begin{equation*}
E_{\leq k}(\Pi)=\binom{n}{2}-E_{\geq k+1}(\Pi)\geq\binom{n}{2}-(n-1)\left(
n-2k-3\right)  \text{.}%
\end{equation*}
By Lemma \ref{estim},%
\[
E_{\leq k}(\Pi)\geq\binom{n}{2}-3\sqrt{1-\frac{2k+9/2}{n}}\left(  \binom{n}%
{2}-u_{m-1}\right)  ,
\]
and by Lemma \ref{double ineq}, $E_{\leq k}(\Pi)\geq u_{k}$ for all
allowable sequences $\Pi$ on $[  n]  $. Therefore $E_{\leq k}(n)\geq
\widetilde{E}_{\leq k}(n)\geq u_{k}$.
\end{proof}

\begin{corollary}
\label{explicit}For any $n$ and $k$ such that $m-1\leq
k\leq(n-2)/2$,
\[
E_{\leq
k}(n)\geq\binom{n}{2}-\frac{1}{9}\sqrt{1-\frac{2k+2}{n}}\left(
5n^{2}+19n-31\right).%
\]

\end{corollary}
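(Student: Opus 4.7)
The plan is to derive this closed-form bound by combining Theorem \ref{recursive}, which gives $E_{\le k}(n) \ge u_k$, with the right-hand inequality of Lemma \ref{double ineq}, which bounds the gap $\binom{n}{2} - u_k$ in terms of $\binom{n}{2} - u_{m-1}$:
\[
\binom{n}{2} - u_k \le 3\sqrt{1 - \frac{2k+2}{n}}\left(\binom{n}{2} - u_{m-1}\right).
\]
Rearranging and using $E_{\le k}(n) \ge u_k$, the corollary will follow at once from the single arithmetic inequality
\[
27\left(\binom{n}{2} - u_{m-1}\right) \le 5n^2 + 19n - 31. \qquad (\star)
\]

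To establish $(\star)$ I will substitute the explicit value of $u_{m-1}$ supplied by the recursion, namely
\[
u_{m-1} = 3\binom{m+1}{2} + 3\binom{m+1-\lfloor n/3\rfloor}{2} - 3\bigl(m - \lfloor n/3\rfloor\bigr)\left(\tfrac{n}{3} - \lfloor n/3\rfloor\right),
\]
with $m = \lceil(4n-11)/9\rceil$. A leading-order calculation using $m \approx 4n/9$ and $\lfloor n/3\rfloor \approx n/3$ gives $u_{m-1} \approx 8n^2/27 + n^2/54 + O(n) = 17n^2/54 + O(n)$, so $\binom{n}{2} - u_{m-1} = 5n^2/27 + O(n)$, in agreement with the leading term of $(5n^2+19n-31)/27$. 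To turn this asymptotic match into an exact inequality, I will split into cases according to $n \bmod 9$ (which pins down the ceiling in $m$) and $n \bmod 3$ (which pins down $\lfloor n/3\rfloor$), so that $m$ and $\lfloor n/3\rfloor$ become explicit linear expressions in $n$ in each case; $(\star)$ then reduces to an elementary quadratic inequality in $n$ in each residue class, some of which are tight and thereby force the specific constants $19$ and $-31$.

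The only boundary issue is the endpoint $k = (n-2)/2$, which lies outside the range of Theorem \ref{recursive} and is relevant only when $n$ is even. There $1 - (2k+2)/n = 0$, so the corollary just claims $E_{\le n/2-1}(n) \ge \binom{n}{2}$, which is a trivial equality since every edge is a $(\le n/2-1)$-edge. For every other $k$ in the range $m-1 \le k \le (n-3)/2$ the chain Theorem \ref{recursive} $\to$ Lemma \ref{double ineq} $\to$ $(\star)$ produces the stated inequality. The main obstacle will be the residue-by-residue verification of $(\star)$; although each case is a routine quadratic comparison, the interaction between the ceiling in $m$ and the fractional part of $n/3$ makes the bookkeeping the only non-mechanical part of the argument.
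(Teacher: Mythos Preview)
Your approach is essentially identical to the paper's: both combine Theorem~\ref{recursive} with the right-hand inequality of Lemma~\ref{double ineq} and then verify the equivalent of your $(\star)$ by a residue check modulo~$9$ (the paper phrases $(\star)$ as the lower bound $u_{m-1}\ge \tfrac{17}{54}n^2-\tfrac{65}{54}n+\tfrac{31}{27}$, with equality at $n\equiv 3\pmod 9$). One small correction: you should handle the endpoint $k=\lfloor n/2\rfloor-1$ trivially for \emph{all} $n$, not just even $n$, since for odd $n$ the value $k=(n-3)/2$ also lies outside the range $k\le (n-5)/2$ of Lemma~\ref{double ineq}; there $E_{\le k}(n)=\binom{n}{2}$ again exceeds the stated bound because $\sqrt{1-(2k+2)/n}=\sqrt{1/n}>0$.
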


\begin{proof}
Let $\Pi$ be an allowable sequence on $[n]$. If $k=\lfloor
n/2\rfloor -1$, then $E_{\leq \lfloor n/2\rfloor -1}( \Pi )
=\binom{n}{2}$. For $k<\lfloor n/2\rfloor -1$, it follows that
$n\geq3$ and from Theorem \ref{recursive} and Lemma \ref{double
ineq},
\[
E_{\leq k}(\Pi)\geq u_{k}\geq\binom{n}{2}-3\sqrt{1-\frac{2k+2}{n}}\left(
\binom{n}{2}-u_{m-1}\right)  .%
\]
Considering the possible residues of $n$ modulo $9$, it can be
verified that for $n\geq3$,
\[
u_{m-1} \geq \frac{17}{54} n^2-\frac{65}{54} n+\frac{31}{27}\text {
(equality if $n\equiv 3$ (mod 9))}.
\]
Therefore $E_{\leq k}(n)\geq\widetilde{E}_{\leq
k}(n)\geq\binom{n}{2}-\frac{1} {9}\sqrt{1-\frac{2k+2}{n}}(
5n^{2}+19n-31)  $.
\end{proof}

\subsection*{Proofs of the Lemmas}
\begin{proof}[Proof of Lemma \ref{double ineq}]
The integer range $[ m-1,(n-5)/2] $ is empty for $n\leq5$. Assume
$n\geq6$ and proceed by induction on $k$. If $k=m-1$, then
$3\sqrt{1-(2m+5/2)/n}\leq1\leq3\sqrt{1-2m/n}$ is equivalent to
$\lceil (  4n-11)  /9\rceil \leq4n/9\leq\lceil ( 4n-11) /9\rceil
+5/4$ which holds in general. Assume that $k\geq m$ and that
(\ref{boundingtheus}) holds for $k-1$. From the definition of $u_k$
and the induction hypothesis,
\begin{align*}
\binom{n}{2}-u_{k}  & \leq\binom{n}{2}-\frac{1}{n-2k-2}\left(
\binom{n}{2}+(n-2k-3)u_{k-1}\right)  \\
& =\frac{n-2k-3}{n-2k-2}\left(  \binom{n}{2}-u_{k-1}\right)
\leq3\left( \binom{n}{2}-u_{m-1}\right)
\frac{n-2k-3}{n-2k-2}\sqrt{1-\frac{2k}{n}},
\end{align*}
and $(n-2k-3)\sqrt{1-2k/n}\;/ (n-2k-2)\leq \sqrt{1-(2k+2)/n}$
because $k \leq (n-5)/2$, which proves the second inequality in
(\ref{boundingtheus}). Similarly, from the definition of $u_k$ and
the induction hypothesis,
\begin{align*}
\binom{n}{2}-u_{k}  & \geq\binom{n}{2}-\frac{1}{n-2k-2}\left(
\binom{n}{2}+(n-2k-3)u_{k-1}\right)  -1\\
& =\frac{n-2k-3}{n-2k-2}\left(  \binom{n}{2}-u_{k-1}\right)
-1\geq3\left( \binom{n}{2}-u_{m-1}\right)
\frac{n-2k-3}{n-2k-2}\sqrt{1-\frac{2k+5/2}{n}}-1.
\end{align*}


Hence, to prove the second inequality in (\ref{boundingtheus}), it
is enough to show that $3(  \binom{n}{2}-u_{m-1})  d>1,$ where
\begin{equation}
d=\frac{n-2k-3}{n-2k-2}\sqrt{1-\frac{2k+5/2}{n}}-\sqrt{1-\frac{2k+9/2}{n}}
\label{definitiond}
\end{equation}
is always positive because $k\leq(n-5)/2$.
First note that%
\[
u_{m-1}\leq3\binom{m+1}{2}+3\binom{m+1-\lfloor n/3\rfloor}{2}\leq
3\binom{(4n+6)/9}{2}+3\binom{(n+10)/9}{2},
\]
which implies that%
\begin{equation}
3\left(  \binom{n}{2}-u_{m-1}\right)  \geq\frac{1}{9}\left(  5n^{2}%
-25n+4\right)  .\label{boundbinomial-u}%
\end{equation}
Multiplying the easily-verified inequality
\[
1>\frac{\left(  n-2k-3\right)  \sqrt{n-2k-5/2}+\left(  n-2k-2\right)
\sqrt{n-2k-9/2}}{\left(  2n-4k-5\right)  \sqrt{n-2k-5/2}}%
\]
by Identity (\ref{definitiond}), yields
\begin{align*}
d &  >\frac{n-2k-9/4}{\left(  n-2k-2\right)  ^{2}\sqrt{n\left(
n-2k-5/2\right)  }}\cdot\frac{2n-4k-4}{2n-4k-5}\label{d ineq}\\
&  >\frac{n-2k-9/4}{\left(  n-2k-2\right)  ^{2}\sqrt{n\left(
n-2k-5/2\right)
}}\nonumber\\
&  =\left(  1-\frac{1}{4\left(  n-2k-2\right)  }\right)
\frac{1}{\left( n-2k-2\right)  \sqrt{n\left(  n-2k-2-1/2\right)
}}.\nonumber
\end{align*}
Since $(4n-11)/9\leq k\leq(n-5)/2$, then $3\leq n-2k-2\leq(n+4)/9$. Thus%
\[
d>\left(  1-\frac{1}{12}\right)  \frac{27}{\left(  n+4\right)
\sqrt{n\left( n-1/2\right)  }}=\frac{99}{4\left(  n+4\right)
\sqrt{n\left(  n-1/2\right). }}%
\]
This inequality, together with Inequality (\ref{boundbinomial-u}), imply that for all $n\geq6$,%
\[
3\left(  \binom{n}{2}-u_{m-1}\right)  d>\frac{11}{4}\left(  \frac
{5n^{2}-25n+4}{\left(  n+4\right)  \sqrt{n\left(  n-1/2\right)
}}\right)
>1.\qedhere
\]
\end{proof}

\begin{proof}[Proof of Lemma \ref{estim}]
For each $n\leq40$ the integer range $[ m,(n-5)/2] $ is either empty
or contains only $k=\lfloor(n-5)/2\rfloor $. For these cases, the
inequality can easily be verified. Assume $n\geq41$, it follows from
Inequality (\ref{boundbinomial-u}) that
\[
9\left(  1-\frac{2k+9/2}{n}\right)  \left(
\binom{n}{2}-u_{m-1}\right) ^{2}\geq\frac{(n-2k-9/2)\left(
5n^{2}-25n+4\right)  ^{2}}{81n}.
\]
Since $k\leq(n-5)/2$, then
\[
n-2k-9/2\geq\frac{(n-2k-3)^{2}}{n-2k+3}.
\]
Also $k\geq m\geq(4n-11)/9$ implies $n-2k+3\leq(n+49)/9$ and thus
\[
\frac{(n-2k-9/2)\left(  5n^{2}-25n+4\right)  ^{2}}{81n}\geq\frac
{(n-2k-3)^{2}\left(  5n^{2}-25n+4\right)  ^{2}}{9n\left(
n+49\right)
}\text{.}%
\]
Finally, for $n\geq41$,
\[
\frac{\left(  5n^{2}-25n+4\right)  ^{2}}{9n\left(  n+49\right)
}\geq (n-1)^{2},
\]
and consequently%
\[
9\left(  1-\frac{2k+9/2}{n}\right)  \left(
\binom{n}{2}-u_{m-1}\right) ^{2}\geq(n-1)^{2}(n-2k-3)^{2}.\qedhere
\]
\end{proof}

\section{\label{lower crossings}New lower bound on $\rcr(n)$}

In this section, we use Corollary \ref{explicit} to get the
following new lower bound on $\rcr(n)$.

\begin{theorem}
$\rcr(n)\geq\frac{277}{729}\binom{n}{4}+\Theta(n^{3})>0.379972\binom
{n}{4}+\Theta(n^{3})$. \label{th: crossing}
\end{theorem}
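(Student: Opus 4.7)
The plan is to plug the two lower bounds for $E_{\le k}(n)$ already at our disposal into Identity (\ref{crossingsvsksets}), and then evaluate the resulting sum asymptotically. Concretely, I would use the second form of (\ref{crossingsvsksets}), which writes
\[
\crg(P)=\sum_{k=0}^{\lfloor n/2\rfloor-2}(n-2k-3)\,E_{\le k}(P)-\frac{3}{4}\binom{n}{3}+O(n^{2}),
\]
split the outer sum at the threshold $k=m-1$ where $m=\lceil(4n-11)/9\rceil$, and bound each range with the sharpest available estimate: for $k<m$ we use Inequality (\ref{lower}), and for $m-1\le k\le\lfloor n/2\rfloor-2$ we use Corollary \ref{explicit}. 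The point of splitting at $k\approx 4n/9$ is that the two bounds are continuous there (both equal $\tfrac{17}{54}n^{2}+O(n)$), so gluing them produces a single piecewise lower bound on $E_{\le k}(n)$ that is tight in the sense the paper needs.

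Next I would convert the two resulting sums into Riemann-sum approximations of integrals, parametrizing by $\alpha=k/n\in[0,1/2]$. To leading order the integrand becomes $(1-2\alpha)\,c(\alpha)\,n^{3}$, where
\[
c(\alpha)=\frac{3}{2}\alpha^{2}+\frac{3}{2}\max\{0,\alpha-\tfrac{1}{3}\}^{2}\quad\text{for }\alpha\le\tfrac{4}{9},\qquad
c(\alpha)=\frac{1}{2}-\frac{5}{9}\sqrt{1-2\alpha}\quad\text{for }\tfrac{4}{9}\le\alpha\le\tfrac{1}{2},
\]
comes from Inequality (\ref{lower}) and Corollary \ref{explicit}, respectively. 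Thus the leading-order coefficient of $\crg(P)$ is $n^{4}\int_{0}^{1/2}(1-2\alpha)c(\alpha)\,d\alpha+O(n^{3})$. I would split this integral into the three natural pieces $[0,\tfrac{1}{3}]$, $[\tfrac{1}{3},\tfrac{4}{9}]$, and $[\tfrac{4}{9},\tfrac{1}{2}]$, evaluate the first two as ordinary polynomial integrals, and handle the third via the substitution $u=1-2\alpha$, reducing it to $\tfrac{1}{2}\int_{0}^{1/9}(\tfrac{u}{2}-\tfrac{5u^{3/2}}{9})\,du$. Adding the three pieces gives $\tfrac{277}{17496}$, and multiplying by $24$ (to convert $n^{4}$ into $\binom{n}{4}$) yields the advertised constant $\tfrac{277}{729}$.

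Finally, I would tidy up the error analysis. The replacement of the floor terms $\lfloor n/3\rfloor$ in Inequality (\ref{lower}) and of $\binom{n}{2}$ and $\sqrt{1-(2k+2)/n}\,(5n^{2}+19n-31)$ in Corollary \ref{explicit} by their leading terms produces only $O(n^{3})$ discrepancies in the total sum, since each term in the sum contributes $O(n^{2})$ and there are $O(n)$ terms, while the corrections to individual terms are $O(n)$; the Riemann-sum error from replacing $\sum$ by $n\int d\alpha$ is also $O(n^{3})$, because the integrand (other than the square-root endpoint in $I_{2}$) is Lipschitz on each of the three subintervals and the sole square-root singularity at $\alpha=1/2$ contributes only $O(n^{3})$ by a standard estimate. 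Collecting these contributions into the $\Theta(n^{3})$ term stated in the theorem completes the argument, and the final numerical check $277/729>0.379972$ is immediate.

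The main obstacle is really just the bookkeeping: making sure the two regimes of $c(\alpha)$ patch together continuously at $\alpha=4/9$ (which they do, both sides equal $17/54$), and confirming that the $O(n^{3})$ error terms from Inequality (\ref{lower}), Corollary \ref{explicit}, and the Riemann-sum approximation are genuinely $O(n^{3})$ and not larger. Once this is in place, the computation of the two integrals is elementary and the constant $\tfrac{277}{729}$ emerges directly.
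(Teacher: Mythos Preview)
Your proposal is correct and follows essentially the same approach as the paper's proof: both plug Inequality~(\ref{lower}) for $k\le m-1$ and Corollary~\ref{explicit} for $k\ge m$ into Identity~(\ref{crossingsvsksets}), pass to a Riemann sum with the parametrization $\alpha=k/n$, and evaluate the resulting integral $24\int_0^{1/2}(1-2\alpha)c(\alpha)\,d\alpha$ to obtain the constant $277/729$. The paper is terser about the error analysis you spell out, and it first normalizes the sum so the factor of $24$ appears up front, but the substance of the argument is identical.
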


\begin{proof}
We actually prove that the right hand side is a lower bound on
$\pscr(n)$. According to (\ref{crossingsvsksets}), if $\Pi$ is an
awollable sequence on $[  n]  $, then
\[
\crg\left(  \Pi\right)  =\binom{n}{4}\left(  24
{\displaystyle\sum\limits_{k=0}^{\left\lfloor n/2\right\rfloor -1}}
\frac{1}{n}\left(  1-\frac{2k}{n}\right)  \frac{E_{\leq k}\left(
\Pi\right) }{n^{2}}\right)  +\Theta\left(  n^{3}\right).
\]
Using Inequality (\ref{lower}) for $0\leq k\leq m-1$ gives
\[
\frac{E_{\leq k}\left(  \Pi\right)  }{n^{2}}\geq\frac{3}{2}\left(  \frac{k}%
{n}\right)  ^{2}+\frac{3}{2}\max\left(
0,\frac{k}{n}-\frac{1}{3}\right) ^{2}-\Theta\left(
\frac{1}{n}\right).
\]
Similarly, if $m\leq k\leq\lfloor n/2\rfloor -1$, then by Corollary
\ref{explicit},%
\[
\frac{E_{\leq k}\left(  \Pi\right)  }{n^{2}}\geq\frac{1}{2}-\frac{5}{9}%
\sqrt{1-\frac{2k}{n}}+\Theta\left(  \frac{1}{n}\right)  \text{.}%
\]
Therefore,%
\begin{align*}
\crg\left(  \Pi\right)   &  \geq\binom{n}{4}\left(  24\int_{0}^{4/9}\frac{3}%
{2}(1-2x)\left(  x^{2}+\max\left(  0,x-\frac{1}{3}\right)  ^{2}\right)
dx\right)  \\
&  +\binom{n}{4}\left(  24\int_{4/9}^{1/2}\left(  1-2x\right)  \left(
\frac{1}{2}-\frac{5}{9}\sqrt{1-2x}\right)  dx\right)  +\Theta(n^{3})\\
&  \geq\binom{n}{4}\left(  \frac{86}{243}+\frac{19}{729}\right)
+\Theta
(n^{3})=\frac{277}{729}\binom{n}{4}+\Theta(n^{3})\text{.}\qedhere
\end{align*}
\end{proof}

The following is the list of best lower bounds for $\pscr( n) $ in
the range $28\leq n\leq99$ that follow from using Identity
(\ref{crossingsvsksets}) with the bound in either Inequality
(\ref{lower}) or the new bound from Theorem \ref{recursive}.

\begin{center}%
\begin{tabular}
[c]{l|l||l|l||l|l||l|l||l|l||l|l}%
$n$ & $\pscr\left(  n\right)  \geq$ & $n$ & $\pscr\left( n\right)
\geq$ & $n$ & $\pscr\left(  n\right)  \geq$ & $n$ & $\pscr\left(
n\right)  \geq$ & $n$ & $\pscr\left( n\right)  \geq$ & $n$ &
$\pscr\left( n\right) \geq$\\\hline\hline $28$ & 7233 & $40$ & 33048
& $52$ & 99073 & $64$ & 234223 & $76$ & 475305 &
$88$ & 866947\\
$29$ & 8421 & $41$ & 36674 & $53$ & 107251 & $65$ & 249732 & $77$ & 501531 &
$89$ & 907990\\
$30$ & 9723 & $42$ & 40561 & $54$ & 115878 & $66$ & 265888 & $78$ & 528738 &
$90$ & 950372\\
$31$ & 11207 & $43$ & 44796 & $55$ & 125087 & $67$ & 282974 & $79$ & 557191 &
$91$ & 994394\\
$32$ & 12830 & $44$ & 49324 & $56$ & 134798 & $68$ & 300767 & $80$ & 586684 &
$92$ & 1039840\\
$33$ & 14626 & $45$ & 54181 & $57$ & 145030 & $69$ & 319389 & $81$ & 617310 &
$93$ & 1086725\\
$34$ & 16613 & $46$ & 59410 & $58$ & 155900 & $70$ & 338913 & $82$ & 649190 &
$94$ & 1135377\\
$35$ & 18796 & $47$ & 65015 & $59$ & 167344 & $71$ & 359311 & $83$ & 682308 &
$95$ & 1185551\\
$36$ & 21164 & $48$ & 70948 & $60$ & 179354 & $72$ & 380531 & $84$ & 716507 &
$96$ & 1237263\\
$37$ & 23785 & $49$ & 77362 & $61$ & 192095 & $73$ & 402798 & $85$ & 752217 &
$97$ & 1290844\\
$38$ & 26621 & $50$ & 84146 & $62$ & 205437 & $74$ & 425980 & $86$ & 789077 &
$98$ & 1346029\\
$39$ & 29691 & $51$ & 91374 & $63$ & 219457 & $75$ & 450078 & $87$ & 827289 &
$99$ & 1402932
\end{tabular}

\end{center}

\section{A point-set with few $(\le k)$-edges for every $k \le
{4n/9}-1$}\label{constructions}

Combining Inequality (\ref{lower}) and Theorem~\ref{recursive}, we
obtain the best known lower bound for $E_{\le k}(n)$. If $n$ is a
multiple of $9$ and $k \le (4n/9)-1$, then this bound reads

\begin{align}
{E_{\leq k}(n)}\ge %
\begin{cases}
3\binom{k+2}{2} & \text{\quad if $0 \le k\leq n/3-1$,} \\[0.2cm]
3\binom{k+2}{2}+3\binom{k-n/3+2}{2} & \text{\quad if $n/3 \le k\leq
  4n/9-2$,} \\[0.2cm]
3\binom{(4n/9-1)+2}{2}+3\binom{(4n/9-1)-n/3+2}{2}+3
& \text{\quad if $k=4n/9-1$.}
\end{cases}%
\label{eq:thelowerbound}
\end{align}

Our aim in this section is to show that this bound is tight for $n
\ge 27$.  This improves on the construction in~\cite{AGOR3}, where
tightness for Inequality (\ref{eq:thelowerbound}) is proved for $k
\le (5n/12)$.

We recursively construct, for each integer
$\eme\ge 3$, a $9\eme$-point set $S_{\eme}$ such that for every $k
\le (4n/9)-1$, $E_{\le k}(S_{\eme})$
equals the right hand side of~(\ref{eq:thelowerbound}).



\subsubsection*{Constructing the sets $S_\eme$}

If $a$ and $b$ are distinct points, then $\edge{ab}$ denotes the
line spanned by $a$ and $b$, and $\oL{ab}$ denotes the closed line
segment with endpoints $a$ and $b$,  directed from $a$ towards $b$.
Let $\theta$ denote the clockwise rotation by an angle of $2\pi/3$
around the origin. At this  point  the reader may want to take a
sneak preview at Figure~\ref{fig:figure2}, where $S_3$ is sketched.

For each $\eme \ge 3$ the set $S_{\eme}$ is naturally partitioned
into nine sets of size $r$: $A_\eme=\{a_1,\ldots,a_\eme\}$,
$A_\eme'=\{a_1',\ldots,a_\eme'\}$, $A_\eme''$, and their respective
$2\pi/3$ and $4\pi/3$ rotations around the origin. The elements of
$A_\eme''$ are not labeled because they change in each iteration.
For $i=1,\ldots,r$, we let $b_i=\theta(a_i), b_i'=\theta(a_i'),
c_i=\theta^2(a_i)$, and $c_i'=\theta^2(a_i)$. Thus if we let
$B_\eme=\{b_1,\ldots,b_\eme\}$, $B_\eme'=\{b_1',\ldots,b_\eme'\}$,
$B_\eme''=\theta(A_\eme'')$, $C_\eme=\{c_1,\ldots,c_\eme\}$,
$C_\eme'=\{c_1',\ldots,c_\eme'\}$, and
$C_\eme''=\theta^2(A''_\eme)$, then we obtain $B_\eme\cup
B_\eme'\cup B_\eme''$ (respectively, $C_\eme\cup C_\eme'\cup
C_\eme''$) by applying $\theta$ (respectively, $\theta^2$) to
$A_\eme\cup A_\eme'\cup A_\eme''$. We refer to this property as the
$3$-{\em symmetry} of $S_{\eme}$.

As we mentioned before, the construction of the sets $S_\eme$ is
recursive. For $\eme \ge 3$, we obtain $A_{\eme+1}$ and
$A_{\eme+1}'$ by adding suitable points $a_{\eme+1}$ to $A_{\eme}$
and $a'_{\eme+1}$ to $A_{\eme}'$. Keeping $3$-symmetry, this
determines $B_{\eme+1}$, $B'_{\eme+1}$, $C_{\eme+1}$, and
$C'_{\eme+1}$. However, the set $A''_{\eme+1}$ is {\em not} obtained
by adding a point to $A''_\eme$, but instead is defined in terms of
$B_{\eme+1},B'_{\eme+1}, C_{\eme+1}$, and $C'_{\eme+1}$; this
explains why we have not listed the elements in $A''_\eme,
B''_\eme$, and $C''_\eme$.

Before moving on with the construction, we remark that the sets
$S_\eme$ contain subsets of more than two collinear points. As it
will become clear from the construction, the points can be slightly
perturbed to general position, so that the number of $(\le k)$-edges
remains unchanged for every $k \le 4n/9-1$.

\begin{figure}[hp]
\begin{center}
\includegraphics[scale=.95]{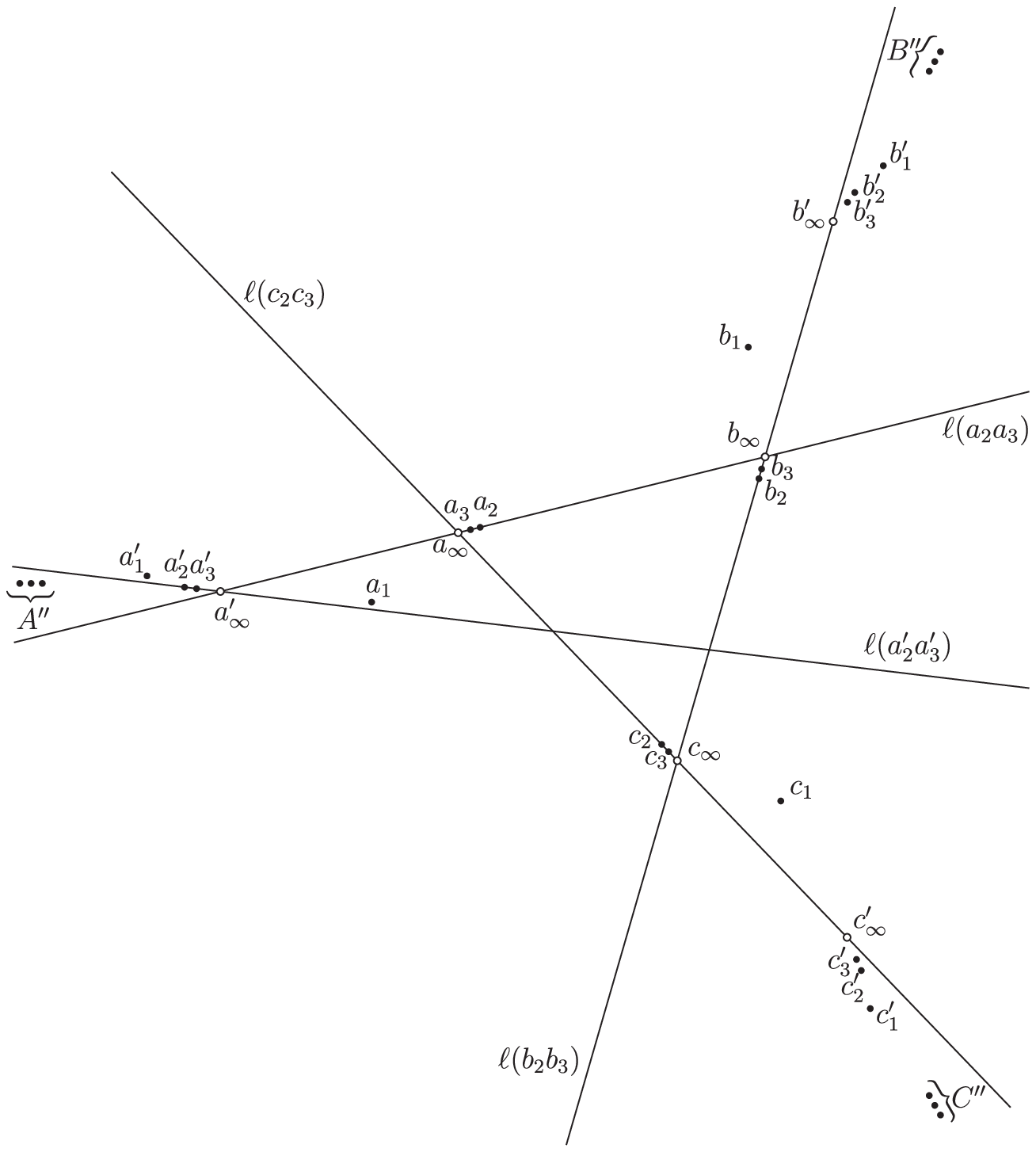}
\caption{The $27$-point set $S_{3}$. The points $a_\infty,
a'_\infty,  b_\infty, b'_\infty, c_\infty,$ and $c'_\infty$ do not
belong to  $S_3$.} \label{fig:figure2}
\end{center}
\end{figure}

We start by describing $S_{3}$, see Figure~\ref{fig:figure2}. First
we explicitly fix $A_3$ and $A_3'$: $a_1 = (-700, -50)$, $a_2 =
(-410 ,150)$, $a_3 = (-436 , 144 )$, $a'_1=( -1300, 20 )$, $a'_2
=(-1200,-10 )$, and $a'_3=(-1170 ,-14 )$. Thus $B_3, B_3',C_3,$ and
$C_3'$ also get determined. For the points in $A_3''$ we do not give
their exact coordinates, instead we simply ask that they satisfy the
following: all the points in $A_3''$ lie on the $x$-axis, and are
sufficiently far to the left of $A_{3} \cup A_{3}'$ so that if a
line $\ell_1$ passes through a point in $A_3''$ and a point in
$S_3\setminus {(B_3''\cup
  C_3'')} $, and a line $\ell_2$ passes through two points in $S_3\setminus{A_3''}$, then
the slope of $\ell_1$ is smaller in absolute value than the slope of
$\ell_2$, i.e., $\ell_1$ is closer (in slope) to a horizontal line,
than $\ell_2$.

\begin{figure}[h]
\begin{center}
\includegraphics[scale=1]{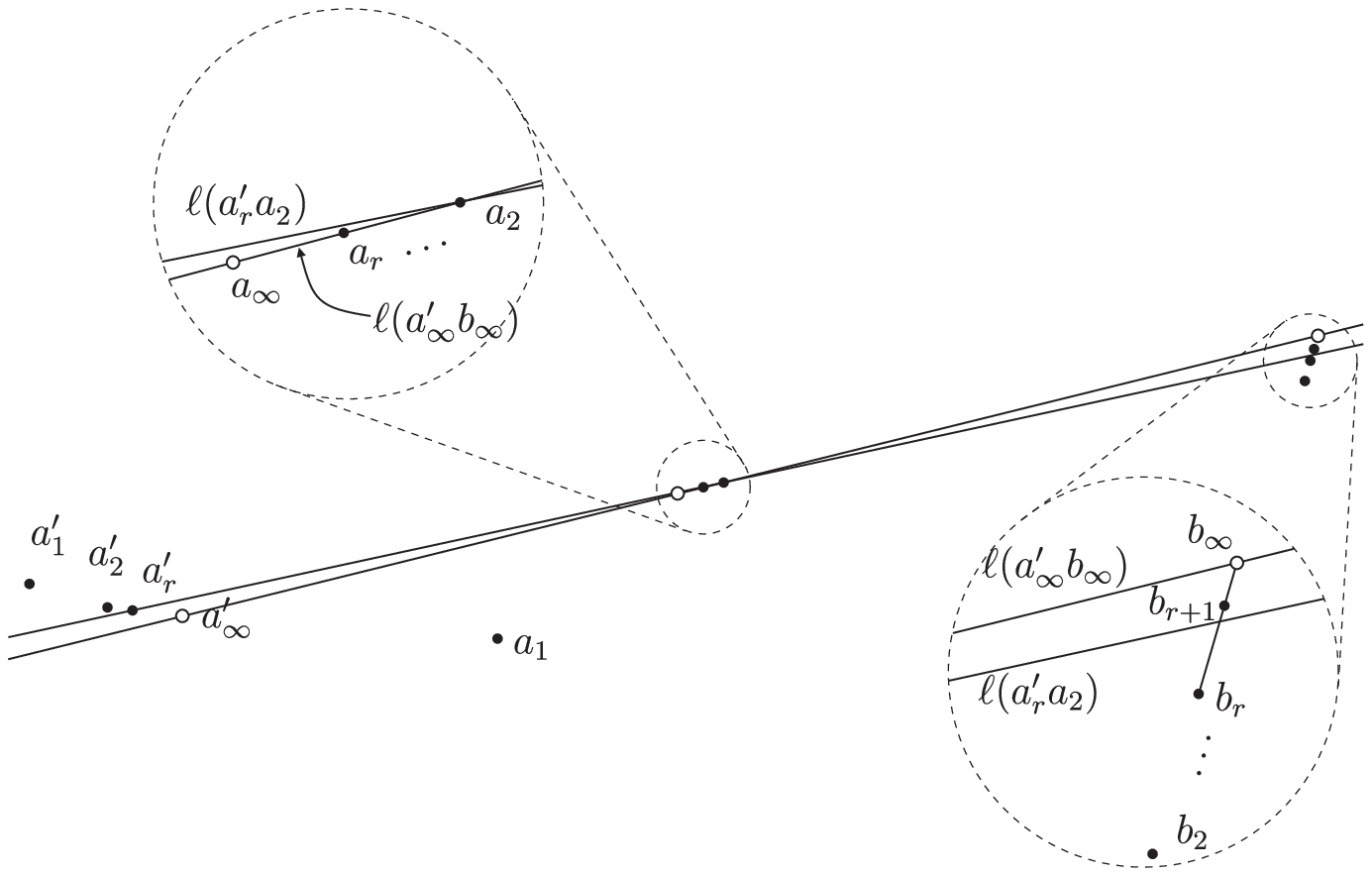}
\caption{$b_{\eme+1}$ is placed in between $b_\eme$
  and $b_\infty$, above the line $\edge{a'_\eme a_2}$.}
\label{fig:figure3}
\end{center}
\end{figure}

We need to define six auxiliary points
not in $S_\eme: a_\infty = \edge{a_2 a_3} \cap \edge{c_2 c_3}$ and
$a'_\infty = \edge{a'_2 a'_3} \cap \edge{a_2 a_3}$. As
expected, let $b_\infty=\theta(a_\infty),
c_\infty=\theta^2(a_\infty), b'_\infty=\theta(a'_\infty),$ and
$c'_\infty=\theta^2(a'_\infty)$.

We now describe how to get $S_{\eme+1}$ from $S_{\eme}$. The crucial
step is to define the points $b_{\eme+1}$ and $a'_{\eme+1}$ to be
added to $B_\eme$ and $A'_{\eme}$ to obtain $B_{\eme+1}$ and
$A'_{\eme+1}$, respectively. Then we construct $A''_{\eme+1}$ and
applying $\theta$ and $\theta^2$ to $B_{\eme+1}$, $A'_{\eme+1}$, and
$A''_{\eme+1}$, we obtain the rest of $S_{\eme+1}$.

Suppose that for some $\eme \ge 3$, the set $S_\eme$ has been
constructed so
that the following properties hold for
$\te = \eme$
(this is clearly true for the base case $\eme=3$):

\begin{description}
\item{(I)} The points $a_2,\ldots,a_\te$ appear in
this order along $\oL{a_2 a_\infty}$.
\item{(II)} The points
$a_2',\ldots,a_\te'$ appear in this order along $\oL{a'_2
  a'_\infty}$.
\item{(III)} For all $i=2,\ldots,\te-1$ and $j=2,\ldots,\te$,  $\edge{a'_i a_j}$
  intersects the interior of  $\oL{b_i b_{i+1}}$.
\item{(IV)} For all $j=2,\ldots,\te$,  $\edge{a'_\te a_j}$
  intersects the interior of  $\oL{b_\te b_{\infty}}$.
\end{description}

Now we add $b_{\eme+1}$ and $a'_{\eme+1}$. Place $b_{\eme+1}$
anywhere on the open line segment determined by $b_\infty$ and the
intersection point of $\edge{a_\eme'a_2}$ with $\oL{b_\eme
b_\infty}$. (The existence of this intersection point is guaranteed
by (IV), see Figure~\ref{fig:figure3}). Place $a'_{\eme+1} $
anywhere on the open line segment determined by $a'_\infty$ and the
intersection point of $\edge{b_{\eme+1} a_\infty}$ with $\oL{a'_\eme
a'_\infty}$. (This intersection exists because $a_\infty', a_\infty,
a_2$, and $b_\infty$ are collinear and appear in this order along
$\edge{a_\infty' b_\infty}$, the line $\edge{a_\infty' b_\infty}$
separates $b_{r+1}$ from $a_\eme'$, and the line $\edge{a_\eme'
a_2}$ separates $b_{r+1}$ from $a_\infty$, see
Figure~\ref{fig:figure4}). Thus $B_{\eme+1}$ and $A'_{\eme+1}$ and
consequently $A_{\eme+1}, C_{\eme+1}, B'_{\eme+1}$, and
$C'_{\eme+1}$, are defined. It is straightforward to check that
(I)--(IV) hold for $t = \eme+1$.

\begin{figure}[htbp]
\begin{center}
\includegraphics[scale=1]{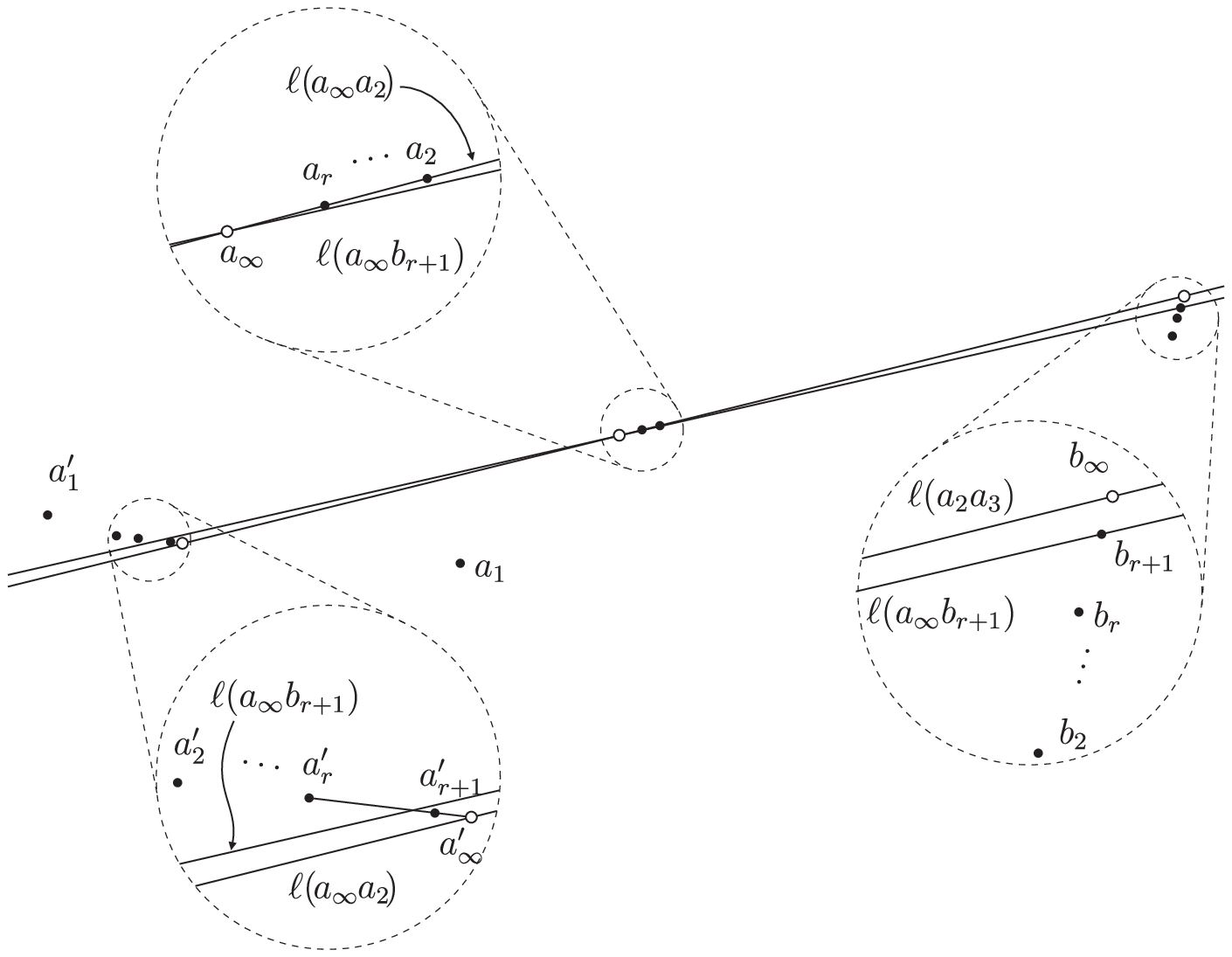}
\caption{$a'_{\eme+1}$ is placed in between $a'_\eme$
  and $a'_\infty$, below the line $\edge{a_\infty b_{\eme+1}}$.}
\label{fig:figure4}
\end{center}
\end{figure}

It only remains to describe how to construct $A''_{\eme+1}$. As we
mentioned above, this set is not a superset of $A''_{\eme}$,
instead it gets defined analogously to $A''_3$: we let the points
in $A_{\eme+1}''$ lie on the $x$-axis, and sufficiently far to the left of $A_{\eme+1} \cup
A_{\eme+1}'$, so that if $\ell_1$ passes through a point in $A_{\eme+1}''$ and
through a point in $S_{\eme+1}\setminus {(B_{\eme+1}''\cup C_{\eme+1}'')} $,
and $\ell_2$ spans two points in $S_{\eme+1}\setminus{A_{\eme+1}''}$,
then the slope of $\ell_1$ is smaller in absolute value than the slope
of $\ell_2$.

\subsubsection*{ Calculating $E_{\le k}(S_\eme)$}

We fix $\eme\ge 3$, and proceed to determine $E_{\le k}(S_{\eme})$
for each $k$, $0\le k \le 4\eme-1$. It is now convenient to label
the elements of $A''_{\eme}, B''_{\eme}$, and $C''_{\eme}$. Let
$a''_1,a''_2,\ldots,a''_r$ be the elements of $A''_{\eme}$, ordered
as they appear from left to right along the negative $x$-axis. As
expected, let $b''_i=\theta(a''_i)$ and $c''_i=\theta^2(a''_i)$, for
$i=1,\ldots,\eme$.

We call a $k$-edge {\em bichromatic} if
it joins two points with different label letters (i.e.,
if it is of the form $ab, bc$, or $ac$); otherwise, a $k$-edge
is {\em monochromatic}.
A monochromatic edge is {\em of type $aa$} if
it is of the form $\edge{a_i a_j}$ for some integers $i,j$; edges of
types $aa', aa'', a'a', a'a'', a''a''$ (and their counterparts for $b$
and $c$) are similarly defined. Finally, we say that an edge of any of
the types $aa, aa', aa'', a'a', a'a''$, or $a''a''$ is {\em of type}
$\mathbf{A}$; edges of types $\mathbf{B}$ and $\mathbf{C}$ are
similarly defined. We let $\ebic{k}$ (respectively,
$\emono{k}$) stand for the number of bichromatic (respectively,
monochromatic) $(\le k)$-edges, so that $E_{\le k}(S_{\eme})
= \ebic{k}(S_{\eme}) +  \emono{k}(S_{\eme})$.

 We say that a finite point set $P$ is $3$\emph{-decomposable} if
 it can be partitioned into three equal-size sets $\overline{A}$,
 $\overline{B}$, and $\overline{C}$ satisfying
the following: there is a triangle $T$ enclosing $P$ such that the
orthogonal projections of $P$ onto the three sides of $T$ show $\overline{A}$
between $\oL{B}$ and $\oL{C}$ on one side, $\oL{B}$ between $\oL{A}$ and $\oL{C}$ on another
side, and $\oL{C}$ between $\oL{A}$ and $\oL{B}$ on the third side
(see~\cite{ACFLS}). We say that $\{\oL{A},\oL{B},\oL{C}\}$ is a
$3$-{\em decomposition} of $P$. It is easy to see that if we let
$\oL{A}:=A_\eme \cup A_\eme'\cup A_\eme''$,
$\oL{B}:=B_\eme \cup B_\eme'\cup B_\eme''$, and
$\oL{C}:=C_\eme\cup C_\eme'\cup C_\eme''$,
then $\{\oL{A}, \oL{B}, \oL{C}\}$ is a $3$-decomposition of $S_{\eme}$:
indeed, it suffices to take an enclosing triangle of $S_\eme$ with
one side orthogonal to the line spanned by the points in $A''$,
one side orthogonal to the line spanned by the points in $B''$, and
one side orthogonal to the line spanned by the points in $C''$.
Thus, it follows from Claim 1
  in~\cite{ACFLS} (where it is proved in the more general setting of
  allowable sequences) that
\begin{align}
{\ebic{k}(S_{\eme})}=%
\begin{cases}
3\binom{k+2}{2},&  \text{\quad if $0 \le k\leq 3\eme-1$;} \\[0.2cm]
3\binom{3\eme+1}{2}+(k-3\eme+1)9r,&  \text{\quad if $3\eme\le k\le 4\eme -1$.}%
\end{cases}%
\label{eq:thebic}
\end{align}

We now count the monochromatic $(\le k)$-edges. By $3$-symmetry, it suffices to
focus on those {of type} $\mathbf{A}$.


It is readily checked that for all $i$ and $j$ distinct integers,
$\edge{a_i
  a_j}, \edge{a'_i a'_j}$, and $\edge{a''_i a''_j}$ are $k$-critical edges for
some $k > 4\eme-1$. The same is true for $\edge{a_i a'_j}$ whenever
$i$ and $j$ are not both equal to $1$ (when $i\neq 1$ and $j\neq 1$
this follows from (III) and (IV) ), while $\edge{a_1
  a'_1}$ is a $(4\eme-1)$-edge. Now, for each $i,j$, $1 \le i \le
\eme$, $2\le j \le \eme$, $\edge{a_i'' a'_j}$ is a
$(4\eme+i-j)$-edge,
while $a_i'' a'_1$ is a $(4\eme+i-2)$-edge. Finally, if $1\le i \le
\eme$ and $2\le j \le \eme$, then $\edge{a_i'' a_j}$ is a
$(3\eme+i+j-3)$-edge, and $\edge{a_i'' a_1}$ is a
$(3\eme+i-1)$-edge.  In conclusion (to obtain (i),
we recall that a $k$-edge is also a $(9\eme-2-k)$-edge):
\begin{description}
\item{(i)} for $1 \le s \le \eme$, the
number of  $(3\eme-1+\ese)$-edges of types $a'a''$ or
$aa''$ is $2s$;
\item{(ii)} there is exactly one $(4\eme-1)$-edge of type $aa'$;
and
\item{(iii)} all other edges of type $\mathbf{A}$ are $k$-critical edges for some
$k > 4\eme -1$.
\end{description}

It follows that the number of $(\le k)$-edges of type $A$ is
\begin{description}
\item{(a)} $0$, for
$k \le 3\eme-1$;
\item{(b)} $2\sum_{s=1}^{k-(3r-1)} s=2 {\tbinom{k-3r+2}{2}} $,
for $3\eme \le k \le 4\eme-2$;
\item{(c)} $1 + 2\sum_{s=1}^{(4r-1)-(3r-1)} s = 2 {\tbinom{r+1}{2}} + 1$, for $k = 4\eme-1$.
\end{description}

By $3$-symmetry, for each integer $k$ there are exactly as many $(\le
k)$-edges of type $\mathbf{A}$ as there are of type $\mathbf{B}$, and of
type $\mathbf{C}$. Therefore

\begin{align}
{\emono{k}(S_{\eme})}=%
\begin{cases}
0 &\text{\quad if $0 \le k\leq 3\eme-1$,} \\[0.2cm]
6 {k-(3\eme-2)\choose 2} &\text{\quad if $3\eme \le k\leq 4\eme-2$,} \\[0.2cm]
6 {r+1 \choose 2} + 3 &\text{\quad if $k=4\eme-1$.}
\end{cases}%
\label{eq:themono}
\end{align}

Because $E_{\le k}(S_{\eme}) = \ebic{k}(S_{\eme}) +
\emono{k}(S_{\eme})$, it follows by identities ~(\ref{eq:thebic})
and~(\ref{eq:themono}) that $E_{\le k}(S_{\eme})$ equals the right
hand side of~(\ref{eq:thelowerbound}).

\section{Concluding remarks}

The Inequality in Theorem \ref{main} is best possible. That is,
there are $n$-point sets $P$ whose simple allowable sequence $\Pi$
gives equality in the Inequality of Corollary \ref{coro: maxs}:
\[
E_{\geq k}( \Pi ) =  ( n-2k-1) E_{k-1}( \Pi ) + \binom{s}{2}.
\]
We present two constructions. The first has $s=n-2k-1$ and consists
of $2k+1$ points which are the vertices of a regular polygon and
$n-2k-1$ central points very close to the center of the polygon.
This construction was given in \cite{LVWW} to show that $E_{k-1}
\geq 2k+1$ is best possible. Indeed, note that the $(k-1)$-edges of
$P$ correspond to the larger diagonals of the polygon, and so
$E_{k-1}( \Pi )=2k+1$; moreover, any edge formed by two points in
the central part or one point in the central part and a vertex of
the polygon determine a $(\geq k)$-edge. Thus $E_{\geq k}( \Pi )
=\tbinom{n-2k-1}{2}+(2k+1)(n-2k-1)$, which achieves the desired
equality.

The second construction has $s=0$ and thus it can only be achieved
when $k \geq n/3$. Consider a $(2t+1)$-regular polygon where each
vertex is replaced by a set of $m$ points on a small segment
pointing in the direction of the center of the polygon. Let $\Pi$ be
the allowable sequence corresponding to this point-set, $n=(2t+1)m$,
and $k=tm$. It is straightforward to verify that
$E_{k-1}(\Pi)=(2t+1)m$ and $E_{\geq k} (\Pi)=2(2t+1)\tbinom{m}{2}$.
Thus $E_{\geq k}(\Pi)=(m-1) E_{k-1}(\Pi)=(n-2k-1)E_{k-1}(\Pi)$.

Prior to this work, there were two results that provided a lower
bound for $E_{\leq k}(P)$ based on the behavior of values of $k$
close to $n/2$. First, Welzl \cite{W} as a particular case of a more
general result proved that $E_{\leq k}(P) \geq F_1(k,n)$, where
\[
F_1(k,n)=\binom{n}{2}-2n\left( \sum_{j=k+1}^{n/2}k\right)
^{1/2}<\binom{n}{2}- \frac{\sqrt{2}}{2}n^{3/2}\sqrt{n-2k}.
\]
Second, Balogh and Salazar \cite{BS} proved that $E_{\leq k}(P) \geq
F_2(k,n)$, where $F_2(k,n)$ is a function that, for $n/3 \leq k \leq
n/2$, satisfies that
\begin{equation*}
F_2(k,n) < \binom{n}{2}- \frac{13\sqrt{3}}{36}n^{3/2}
\sqrt{n-2k}+o(n^{2}).
\end{equation*}%
By direct comparison, it follows that both $F_1(k,n)$ and $F_2(k,n)$
are smaller than the bound in Corollary \ref{explicit}. Thus our
bound is better than these two previous bounds.

 A nice feature of Theorem \ref{main} is that
it can give better bounds for $E_{\leq k}(n)$ and $k$ large enough,
and for $\rcr(n)$, provided someone finds a better bound than
Inequality (\ref{lower}) for $E_{\leq k}(n)$ when $4n/9<k<n/2$. For
example, \'Abrego et al. \cite{AFLS} considered $3$-regular point
sets $P$. These are point-sets with the property that for $1\leq j
\leq n/3$, the $j$th depth layer of $P$ has exactly 3 points of $P$.
A point $p\in P$ is in the $j$th depth layer if $p$ belongs to a
$(j-1)$-edge but not to a $(\leq j-2)$-edge of $P$. If $n$ is a
multiple of 18, they proved the following lower bound:
\begin{equation}
E_{\leq k}(P)  \geq 3\binom{k+2}{2}+3\binom{k+2- n/3}{2}+18
\binom{k+2- 4n/9}{2}.\label{eq:third binomial}
\end{equation}
This is better than the bound in Theorem \ref{recursive} for
$k>4n/9$, however using Theorem \ref{main} it is possible to find an
even better lower bound when $k\geq 17n/36$. We construct a new
recursive sequence $ u^\prime$ starting at $m=17n/36$ given by
\begin{align*}
u^\prime_{m-1}  & =3\binom{m+1}{2}+3\binom{m+1-\lfloor
n/3\rfloor}{2}+18 \binom{m+1-\lfloor 4n/9 \rfloor}{2}  \text{ and}\\
u^\prime_{k}  &  =\left\lceil \frac{1}{n-2k-2}\left(  \binom{n}{2}+(n-2k-3)u^\prime_{k-1}%
\right)  \right\rceil \text{ for }k\geq m\text{.}
\end{align*}
The value of $m=17n/36$ is the smallest possible for which
$u^\prime_m$ is greater than the right-side of Inequality
(\ref{eq:third binomial}). Following the proof of Theorem
\ref{recursive} it is possible to show that $E_{\leq k}(P) \geq
u^\prime_{k}$ for $17n/36 \leq k <n/2$. Thus, if we could show that
Inequality (\ref{eq:third binomial}) holds for arbitrary point sets
$P$, then we know that bound will no longer be tight for $k \geq
17n/36$. From equivalent statements to lemmas \ref{double ineq} and
\ref{estim}, it follows that $u^\prime_k \sim
\tbinom{n}{2}-(7\sqrt{2} n^2/18)\sqrt{1-2k/n}$. This in turn
improves the crossing number of $3$-regular point-sets $P$ to
$\rcr(P)\geq 0.380024\tbinom{n}{4}+\Theta(n^3)$.

In \cite{ACFLS} we considered other class of point-sets called
$3$-decomposable. These are point-sets $P$ for which there is a
triangle $T$ enclosing $P$ and a balanced partition $A$, $B$, and
$C$ of $P$, such that the orthogonal projections of $P$ onto the
sides of $T$ show $A$ between $B$ and $C$ on one side, $B$ between
$A$ and $C$ on another side, and $C$ between $A$ and $B$ on the
third side. For $3$-decomposable sets $P$ we were able to prove a
lower bound consisting of an infinite series of binomial
coefficients:
\begin{equation}
E_{\leq k}(P)  \geq 3\binom{k+2}{2}+3\binom{k+2-n/3}{2}
+3\sum_{j=2}^{\infty}j(j+1)\binom{k+2-c_{j}n}{2},
\label{eq:inftybinom}
\end{equation}
where $c_j={1}/{2}-{1}/({3j(j+1)})$.

Our main result does not improve this lower bound, however it gives
an interesting heuristic that provides some evidence about the
potential truth of this inequality for unrestricted point-sets $P$.
If we assume that the sum of the first $t+1$ terms in the right-side
of Inequality (\ref{eq:inftybinom}) is a lower bound for $E_{\leq
k}(P)$, then, just as we outlined in the previous paragraph for
$t=2$, Theorem \ref{main} gives a better bound when $k$ is big
enough. This happens to be precisely when $k \geq c_{t+1} n$, which
is also the value of $k$ for which the next term in the sum of
Inequality (\ref{eq:inftybinom}) gives a nonzero contribution.

It was also shown in \cite{ACFLS} that Inequality
(\ref{eq:inftybinom}) implies the following bound for
$3$-decomposable sets $P$:
\begin{equation}
\rcr(P) \geq
\frac{2}{27}(15-\pi^2)\tbinom{n}{4}+\Theta(n^3)>0.380029\tbinom{n}{4}+\Theta(n^3).\label{eq:cross
infty}
\end{equation}
Theorem \ref{main} does not improve the $\tbinom{n}{4}$ coefficient,
but it improves the speed of convergence. For instance, using
Theorem \ref{main} together with the first 30 terms of Inequality
(\ref{eq:inftybinom}) gives a better bound than the one obtained
solely from the first 101 terms of Inequality (\ref{eq:inftybinom}).

Finally, we reiterate our conjectures from \cite{ACFLS} that
inequalities (\ref{eq:inftybinom}) and (\ref{eq:cross infty}) are
true for unrestricted point-sets $P$. We in fact conjecture that for
every $k$ and $n$, the class of $3$-decomposable sets contains
optimal sets for both $E_{\leq k}(n)$ and $\rcr (n)$.

\end{document}